\newcommand{\multiline}[1]{%
  \begin{tabularx}{\dimexpr\linewidth-\ALG@thistlm}[t]{@{}X@{}}
    #1
  \end{tabularx}
}
\newcommand{\ukappa}{\underline{\kappa}}
\newcommand{\utau}{\underline{\tau}}
\newcommand{\uv}{\mathbf{u}}
\newcommand{\vv}{\mathbf{v}}
\newcommand{\wv}{\mathbf{w}}
\newcommand{\eu}{\underline{e}(\mathbf{u})}
\newcommand{\ev}{\underline{e}(\mathbf{v})}
\newcommand{\A}{\mathsf{A}}
\newcommand{\E}{\mathsf{E}}
\newcommand{\x}{\bm{x}}
\newcommand{\dx}{\,\mathsf{d}\bm{x}}
\newcommand{\HS}{\mathrm{H}^1_0(\Omega)}
\newcommand{\Lom}{\mathrm{L}^{2}(\Omega)}
\newcommand{\e}[1]{\underline{e}(\mathbf{#1})}
\newcommand{\ef}[1]{\underline{e}(#1)}
\newcommand{\norm}[1]{\left\|#1\right\|}
\newcommand{\nnn}[1]{{\left\vert\kern-0.25ex\left\vert\kern-0.25ex\left\vert #1 
    \right\vert\kern-0.25ex\right\vert\kern-0.25ex\right\vert}}
\DeclareMathOperator*{\esssup}{ess\,sup}
\DeclareMathOperator*{\essinf}{ess\,inf}
\newtheorem{theorem}{Theorem}[section]
\newtheorem{lemma}[theorem]{Lemma}
\newtheorem{proposition}[theorem]{Proposition}
\theoremstyle{definition}
\newtheorem{remark}[theorem]{Remark}
\title[Convergence rate for shear-thinning fluids]{On the convergence rate of the Ka\v{c}anov scheme for shear-thinning fluids}
\author[P.~Heid]{Pascal Heid}
\email{pascal.heid@maths.ox.ac.uk}
\author[E.~S\"{u}li]{Endre S\"{u}li}
\email{endre.suli@maths.ox.ac.uk}
\address{Mathematical Institute, University of Oxford, Woodstock Road, Oxford OX2 6GG, UK}
\thanks{PH acknowledges the financial support of the Swiss National Science Foundation (SNF), Project No. P2BEP2\underline{\space}191760.}
\keywords{%
Non-Newtonian fluids, Ka\v{c}anov's method, energy contraction, Carreau model, power-law model}
\subjclass[2010]{65J15, 35Q35, 35J62}
\begin{document}

\begin{abstract}
We explore the convergence rate of the Ka\v{c}anov iteration scheme for different models of 
shear-thinning fluids, including Carreau and power-law type explicit quasi-Newtonian constitutive laws. It is shown that the energy difference contracts along the sequence generated by the iteration. In addition, an \emph{a posteriori} computable contraction factor is proposed, which  improves, on finite-dimensional Galerkin spaces, previously derived bounds on the contraction factor in the context of the power-law model. Significantly, this factor is shown to be independent of the choice of the cut-off parameters whose use was proposed in the literature for the Ka\v{c}anov iteration applied to the power-law model. Our analytical findings are confirmed by a series of numerical experiments.
\end{abstract}



\maketitle

\section{Introduction}

In this work, we focus on the iterative solution of nonlinear partial differential equations that arise in models of steady flows of incompressible shear-thinning fluids, including models with explicit constitutive relations of Carreau and power-law type. 
In particular, we consider the following quasi-Newtonian fluid flow problem: find $(\mathbf{u},p)$ such that
\begin{align} \label{eq:strong}
\begin{split}
-\nabla \cdot \{\mu(\x,|\underline{e}(\mathbf{u})|^2) \underline{e}(\mathbf{u})\}+\nabla p &= \mathbf{f} \quad \text{in } \Omega, \\
\nabla \cdot \mathbf{u}&=0  \quad \text{in } \Omega, \\
\mathbf{u}&=0 \quad \text{on } \partial \Omega,
\end{split}
\end{align}
where $\Omega \subset \mathbb{R}^d$, $d \in \{2,3\}$, is a bounded Lipschitz domain, the source term $\mathbf{f} \in \Lom^d$ is a given external force, $\mathbf{u}$ is the velocity vector, $p$ denotes the pressure, and $\underline{e}(\mathbf{u})$ is the $d \times d$ rate-of-strain tensor defined by 
\[
e_{ij}(\mathbf{u}):=\frac{1}{2} \left(\frac{\partial u_i}{\partial x_j}+\frac{\partial u_j}{\partial x_i}\right),\qquad i,j=1,\ldots,d.
\]
Here, $|\underline{e}(\mathbf{u})|$ denotes the Frobenius norm of $\underline{e}(\mathbf{u})$, and the (real-valued) viscosity coefficient $\mu$ is assumed to satisfy the following structural assumptions:
\begin{enumerate}[({A}1)]
\item $\mu \in C(\overline{\Omega} \times \mathbb{R}_{\geq 0})$ and it is differentiable in the second variable;
\item There exist constants $m_\mu,M_\mu >0$ such that
\begin{align} \label{eq:a2}
m_\mu (t-s) \leq \mu(\x,t^2)t-\mu(\x,s^2)s \leq M_\mu(t-s), \qquad t \geq s \geq 0, \quad \x \in \overline{\Omega};
\end{align}
\item $\mu$ is decreasing in the second variable, i.e., $\mu'(\x,t) \leq 0$ for all $t \geq 0$ and all $\x \in \overline{\Omega}$.
\end{enumerate} 
The assumption (A3) asserts that the viscosity decreases with increasing strain rate, in line with our assumption that the fluid under consideration is shear-thinning. Moreover, (A2) immediately implies that $\mu$ is bounded from above and below; indeed, by setting $s=0$, we obtain
\begin{align} \label{eq:mubd}
m_\mu \leq \mu(\x,t) \leq M_\mu \qquad \text{for all } \x \in \overline{\Omega},  \, t \geq 0.
\end{align}
The bounds $m_\mu$ and $M_\mu$ are, in general, closely related to the infinite and zero shear viscosity plateau, respectively. In the sequel, the dependence of $\mu$ on $\x \in \Omega$ will be suppressed. 

Upon defining $V:=\{\uv \in \HS^d: \nabla \cdot \mathbf{u}=0\}$, the weak formulation of~\eqref{eq:strong} is  as follows:
\begin{align} \label{eq:directweak}
\text{find} \ \uv \in V \ \text{such that} \qquad \int_\Omega \mu(|\underline{e}(\mathbf{u})|^2)\eu:\ev \dx = \int_\Omega \mathbf{f} \cdot \mathbf{v} \dx \qquad \text{for all } \vv \in V,
\end{align}
where $\eu:\ev$ denotes the Frobenius inner product of $\eu$ and $\ev$; we refer to~\cite[\S 2]{BarrettLiu:1993} for more details concerning the weak formulation~\eqref{eq:directweak}. The space $V$ is endowed with the inner product
\begin{align} \label{eq:Vip}
(\uv,\vv)_V=\int_\Omega \e{u}:\e{v} \dx, \qquad \uv,\vv \in V,
\end{align}
and the induced norm $\nnn{\uv}_\Omega^2=(\uv,\uv)_V$, $\uv \in V$. We emphasize that 
\begin{align*} 
\frac{1}{2} \sum_{i=1}^d \int_\Omega |\nabla u_i|^2 \dx \leq \int_\Omega |\eu|^2 \dx \leq \sum_{i=1}^d \int_\Omega |\nabla u_i|^2\dx \qquad \text{for all } \uv \in V,
\end{align*}
i.e., the norm $\nnn{\cdot}_\Omega$ is equivalent to the standard norm on $\HS^d$; the first inequality is a special case of Korn's inequality (see, e.g.,~inequality (1.7) in \cite{NeffPauly:2015}), while the second can be easily verified by invoking the Cauchy--Schwarz inequality. In particular, $V$ endowed with the inner product of~\eqref{eq:Vip} and induced norm $\nnn{\cdot}_\Omega$ is a Hilbert space. 

The weak form~\eqref{eq:directweak} of the boundary-value problem under consideration is known to have a unique solution $\uv^\star \in V$, which will be shown, nonetheless, in \S\ref{sec:uniquesol}; moreover, this element $\uv^\star \in V$ is the unique minimiser of the energy functional 
\begin{align} \label{eq:Edeff}
\E(\uv):= \int_\Omega \varphi(|\eu|^2) \dx-\int_\Omega \mathbf{f} \cdot \mathbf{u} \dx, \qquad \uv \in V,
\end{align}
where
\begin{align*}
\varphi(s):=\frac{1}{2}\int_0^s \mu(t)\, \mathrm{d}t.
\end{align*}
Indeed, a straightforward calculation reveals that, for a given $\uv \in V$,
\begin{align} \label{eq:Efrechet}
\E'(\uv)(\vv)=\int_\Omega \mu(|\eu|^2)\eu:\ev \dx - \int_{\Omega} \mathbf{f} \cdot \mathbf{v} \dx, \qquad \mathbf{v} \in V,
\end{align}
where $\E'$ denotes the G\^{a}teaux derivative; we refer to~\cite[Prop.~2.1]{Baranger:90} for details. In particular, the weak formulation~\eqref{eq:directweak} is the Euler--Lagrange equation for the minimisation of $\E$ over $V$.

A prominent iterative solver for the nonlinear problem~\eqref{eq:directweak} is Ka\v{c}anov's scheme, which, in simple terms, fixes the nonlinearity at the previous iterate: 
for a given $\uv^n \in V$ find $\uv^{n+1} \in V$ such that
\begin{align} \label{eq:kacanovintro}
\int_\Omega \mu(|\underline{e}(\mathbf{u}^n)|^2)\ef{\uv^{n+1}}:\ev \dx = \int_\Omega \mathbf{f} \cdot \mathbf{v} \dx \qquad \text{for all } \vv \in V,\quad n=0,1,\ldots,
\end{align} 
where $\uv^0 \in V$ is an arbitrary initial guess. Early references concerning this iterative method include \cite{kacur:1968}, where it was used to compute a stationary magnetic field in nonlinear media,
and \cite{fucik:1973}, where the convergence of the Ka\v{c}anov iteration was investigated in the context of Galerkin methods; Fu\v{c}\'{\i}k, Kratochv\'{\i}l and Ne\v{c}as point in their work \cite{fucik:1973} to pages 369--370 of Michlin's 1966 monograph \cite{michlin:1966} for a description of the iterative method
introduced by Ka\v{c}anov in \cite{kacanov:1959},
in the context of variational methods for plasticity problems.
Ka\v{c}anov's iteration scheme has been, by now, carefully examined; see, e.g., the monographs~\cite[\S4.5]{Necas:86} and~\cite[\S25.14]{Zeidler:90}, or the papers~\cite{HaJeSh:97,GarauMorinZuppa:11,HeidWihler:19v2}. More recently, it was shown in the articles~\cite{HeidPraetoriusWihler:2020} and~\cite{Diening:2020} that the energy $\E$ from~\eqref{eq:Edeff} contracts along the sequence generated by the Ka\v{c}anov iteration~\eqref{eq:kacanovintro}. Indeed, the first of these two papers established the energy contraction for a more general iteration scheme, and the latter focuses on the Ka\v{c}anov scheme for a `relaxed $p$-Poisson problem' involving a truncation of the nonlinearity from below and from above using a pair of positive cut-off parameters $\varepsilon_{-}$ and $\varepsilon_{+}$. The derived upper bound on the contraction factor depends on the quotient $\nicefrac{m_\mu}{M_\mu}$ involving $\varepsilon_{-}$ and $\varepsilon_{+}$, and may be extremely close to 1 in certain situations; interestingly, this unfavourable predicted dependence of the contraction factor on the ratio $\nicefrac{m_\mu}{M_\mu}$ has not been observed in numerical experiments.
It is this mismatch between the observed behaviour of the method and the rather more pessimistic results of the analysis reported in the literature that motivated the work reported herein. 


We will establish an improved upper bound on the contraction factor of the Ka\v{c}anov iteration for a general class of shear-thinning fluids. The resulting bound will then be further examined for fluids obeying either the Carreau law or a relaxed power-law. It will be shown that for (finite-dimensional) Galerkin approximations of the relaxed power-law model it is the power-law exponent, rather than the ratio $\nicefrac{m_\mu}{M_\mu}$, that is responsible for the rate of convergence of the iteration. Specifically, we will show that the contraction factor of the iteration on finite-dimensional spaces is independent of the choice of the lower and upper cut-off parameters featuring in the so-called relaxed Ka\v{c}anov iteration, where a truncation of the power-law nonlinearity from below and above is carried out by means of these two positive truncation parameters. To the best of our knowledge the proof of such a result was an open question in the literature.


The paper is structured as follows. 
In \S\ref{sec:uniquesol} we will show that the weak formulation~\eqref{eq:directweak} of the problem under consideration has a unique solution, which, in turn, is the unique minimiser of $\E$ in $V$. The proof is based on auxiliary results, which will also be decisive for the derivation of the contraction factor in \S\ref{sec:energycontraction}. In \S\ref{sec:numericalexp} we will perform a series of numerical experiments, which confirm our theoretical results. The paper closes with concluding remarks recorded in \S\ref{sec:conclusions}.

\section{Existence and uniqueness of the solution} \label{sec:uniquesol}

We will show in this section that the weak formulation~\eqref{eq:directweak} has a unique solution. To this end we define, for given $\uv \in V$, the linear operator $\A[\uv]: V \to V^\star$, where $V^\star$ denotes the dual space of $V$, by
\begin{align} \label{eq:Aoperator}
\A[\uv](\vv)(\wv):= \int_\Omega \mu(|\eu|^2) \ev:\e{w} \dx, \qquad \vv,\wv \in V,
\end{align}
and the linear form $\ell \in V^\star$ by
\begin{align*} 
\ell(\wv):=\int_\Omega \mathbf{f} \cdot \mathbf{w} \dx, \qquad \wv \in V.
\end{align*}
In terms of these, the weak formulation~\eqref{eq:directweak} can be restated in the following equivalent form:
\begin{align} \label{eq:weakoperatorform}
\text{find} \ \uv \in V \ \text{such that} \qquad \A[\uv](\uv)(\vv)=\ell(\vv) \qquad \text{for all } \vv \in V,
\end{align}
and the Ka\v{c}anov iteration~\eqref{eq:kacanovintro} takes the form: given $\uv^0 \in V$, 
\begin{align} \label{eq:kacanov}
\text{find} \ \uv^{n+1} \in V \ \mbox{such that} \quad \A[\uv^n](\uv^{n+1})(\vv)=\ell(\vv) \qquad \text{for all } \vv \in V, \quad n=0,1,\ldots.
\end{align} 
By~\eqref{eq:Efrechet} and the definitions of $\A$ and $\ell$ we further have that
\begin{align} \label{eq:Efrechet2}
\E'(\uv)=\A[\uv](\uv)-\ell.
\end{align}

We will now show that the operator $\uv \mapsto \A[\uv](\uv)$ is Lipschitz continuous and strongly monotone, since, in that case, the theory of strongly monotone operators implies that the weak equation~\eqref{eq:weakoperatorform} has a unique solution $\uv^\star \in V$; see, e.g., \cite[\S3.3]{Necas:86} or~\cite[\S25.4]{Zeidler:90}. For the proof of Lipschitz continuity and strong monotonicity of the operator $\uv \mapsto \A[\uv](\uv)$ we require the following result, which, as well as its proof, is largely borrowed from~\cite[Lem.~3.1]{BarrettLiu:1993}. However, we place emphasis on sharp bounds, since these will be crucial for our convergence analysis below, leading to the improved factors appearing in~\eqref{eq:mukappaconstU} and~\eqref{eq:mukappaconstl}.

\begin{lemma}
Let $\mu$ satisfy the assumptions {\rm (A1)--(A3)} and define $\xi(t):=\mu(t^2)t$, $t \geq 0$. Then, for any $\underline{\kappa},\underline{\tau} \in \mathbb{R}^{d\times d}$, the following inequalities hold:
\begin{align} \label{eq:mukappaU}
\left|\mu(|\underline{\kappa}|^2)\underline{\kappa}-\mu(|\underline{\tau}|^2)\underline{\tau}\right|^2 \leq C(\underline{\kappa},\underline{\tau}) |\underline{\kappa}-\underline{\tau}|^2 \leq 3 M_\mu^2 |\underline{\kappa}-\underline{\tau}|^2
\end{align}
and 
\begin{align} \label{eq:mukappal}
(\mu(|\underline{\kappa}|^2)\underline{\kappa}-\mu(|\underline{\tau}|^2)\underline{\tau}):(\underline{\kappa}-\underline{\tau}) \geq c(\underline{\kappa},\underline{\tau})|\underline{\kappa}-\underline{\tau}|^2 \geq m_\mu  |\underline{\kappa}-\underline{\tau}|^2,
\end{align}
where
\begin{align} 
C(\underline{\kappa},\underline{\tau})&:=\left(\sup_{t \in (0,1)} \xi'(|\underline{\kappa}|+t(|\underline{\tau}|-|\underline{\kappa}|)) \right)^2+2\mu(|\underline{\kappa}|^2)\mu(|\underline{\tau}|^2), \label{eq:mukappaconstU} \\ c(\underline{\kappa},\underline{\tau})&:=\inf_{t \in (0,1)} \xi'(|\underline{\kappa}|+t(|\underline{\tau}|-|\underline{\kappa}|)). \label{eq:mukappaconstl}
\end{align}
\end{lemma}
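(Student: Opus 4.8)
The plan is to reduce the two tensorial estimates to scalar inequalities by introducing $a:=|\ukappa|$, $b:=|\utau|$ and the Frobenius product $q:=\ukappa:\utau$, which by Cauchy--Schwarz satisfies $q\in[-ab,ab]$, and then to read both sides as affine functions of the single scalar $q$. First I would expand the Frobenius products and use $\mu(a^2)a=\xi(a)$, $\mu(b^2)b=\xi(b)$ to record the identities
\[
\left|\mu(a^2)\ukappa-\mu(b^2)\utau\right|^2=\xi(a)^2+\xi(b)^2-2\mu(a^2)\mu(b^2)\,q,
\]
\[
\left(\mu(a^2)\ukappa-\mu(b^2)\utau\right):(\ukappa-\utau)=\mu(a^2)a^2+\mu(b^2)b^2-\left(\mu(a^2)+\mu(b^2)\right)q,
\]
together with $|\ukappa-\utau|^2=a^2+b^2-2q$; thus each claimed inequality becomes a sign statement for an affine function of $q$ on $[-ab,ab]$.

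Next I would gather the scalar properties of $\xi$. Passing to the limit in (A2) gives $m_\mu\le\xi'(t)\le M_\mu$ for all $t\ge0$, whence $\xi(a)-\xi(b)=(a-b)\int_0^1\xi'(b+t(a-b))\,\mathrm{d}t$, the mean value $\int_0^1\xi'(b+t(a-b))\,\mathrm{d}t$ lying in $[c(\ukappa,\utau),\sigma]$, where $\sigma:=\sup_{t\in(0,1)}\xi'(|\ukappa|+t(|\utau|-|\ukappa|))$ is the supremum entering $C(\ukappa,\utau)=\sigma^2+2\mu(a^2)\mu(b^2)$; in particular $m_\mu\le c(\ukappa,\utau)$ and $C(\ukappa,\utau)\le M_\mu^2+2M_\mu^2=3M_\mu^2$, which are the coarse bounds. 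The decisive input is (A3): since $\xi'(t)=\mu(t^2)+2t^2\mu'(t^2)$ and $\mu'\le0$, one obtains both $\xi'(t)\le\mu(t^2)$ and that $t\mapsto\mu(t^2)$ is non-increasing; evaluating the first at $t=\max(a,b)$ and using the monotonicity of $\mu(\cdot^2)$ yields $\min\{\mu(a^2),\mu(b^2)\}=\mu(\max(a,b)^2)\ge\xi'(\max(a,b))\ge c(\ukappa,\utau)$ (the infimum defining $c(\ukappa,\utau)$ reaching the endpoint by continuity of $\xi'$), so that
\[
\mu(a^2)+\mu(b^2)\ge 2\,c(\ukappa,\utau).
\]

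With these facts the affine arguments close quickly. In the difference $C(\ukappa,\utau)(a^2+b^2-2q)-(\xi(a)^2+\xi(b)^2-2\mu(a^2)\mu(b^2)q)$ the coefficient of $q$ equals $-2(\sigma^2+\mu(a^2)\mu(b^2))\le0$, while in the lower-bound difference $(\mu(a^2)a^2+\mu(b^2)b^2-(\mu(a^2)+\mu(b^2))q)-c(\ukappa,\utau)(a^2+b^2-2q)$ the coefficient of $q$ equals $-(\mu(a^2)+\mu(b^2)-2c(\ukappa,\utau))\le0$, precisely by the displayed consequence of (A3). An affine function with non-positive slope is minimised over $[-ab,ab]$ at $q=ab$, i.e.\ in the parallel configuration where $|\ukappa-\utau|^2=(a-b)^2$, so it suffices to verify both inequalities there; using $\mu(a^2)\mu(b^2)ab=\xi(a)\xi(b)$ they collapse to $(\xi(a)-\xi(b))^2\le C(\ukappa,\utau)(a-b)^2$ and $(a-b)(\xi(a)-\xi(b))\ge c(\ukappa,\utau)(a-b)^2$, both immediate from the mean-value representation of $\xi(a)-\xi(b)$ and $C(\ukappa,\utau)\ge\sigma^2$.

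The step I expect to be the crux --- and the point at which (A3) is genuinely needed --- is securing the sign $\mu(a^2)+\mu(b^2)\ge 2c(\ukappa,\utau)$ in the monotonicity estimate. Without it the minimising configuration could be the anti-parallel one $q=-ab$, at which the lower bound with the sharp local constant $c(\ukappa,\utau)=\inf\xi'$ would in general fail; it is exactly the shear-thinning hypothesis, reflected in the monotonicity of $t\mapsto\mu(t^2)$ and in $\xi'\le\mu(\cdot^2)$, that forces the extremal configuration to be the parallel one and thereby lets the infimum of $\xi'$ over the segment joining $|\ukappa|$ and $|\utau|$ govern the monotonicity constant.
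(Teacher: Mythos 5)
Your argument is correct, and for the monotonicity bound \eqref{eq:mukappal} it is essentially the paper's proof in a different guise: the paper splits the left-hand side into $(\xi(|\ukappa|)-\xi(|\utau|))(|\ukappa|-|\utau|)$ plus $(\mu(|\ukappa|^2)+\mu(|\utau|^2))(|\ukappa||\utau|-\ukappa:\utau)$, which is exactly your affine function of $q=\ukappa:\utau$ written as its value at $q=|\ukappa||\utau|$ plus (slope)$\times(|\ukappa||\utau|-q)$; both proofs then rest on the same two ingredients, namely the mean value theorem for $\xi$ on the segment joining $|\ukappa|$ and $|\utau|$, and the consequence $\xi'(t)\le\mu(t^2)$ of (A3), which yields $\mu(|\ukappa|^2)+\mu(|\utau|^2)\ge 2c(\ukappa,\utau)$ and forces the extremal configuration to be the collinear one. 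You correctly isolate this as the crux. Where you go beyond the paper is in also proving the Lipschitz bound \eqref{eq:mukappaU}, which the paper deliberately omits and delegates to Barrett and Liu; your uniform ``affine in $q$ with non-positive slope'' mechanism handles both bounds at once and makes transparent that (A3) is needed only for the lower bound (the slope in the upper-bound difference is non-positive unconditionally). Two cosmetic points: the identity $\min\{\mu(|\ukappa|^2),\mu(|\utau|^2)\}=\mu(\max(|\ukappa|,|\utau|)^2)$ is not needed, since $c(\ukappa,\utau)\le\xi'(|\ukappa|)\le\mu(|\ukappa|^2)$ and likewise for $\utau$ already give the required sign; and the integral (Newton--Leibniz) form of the mean value representation tacitly assumes $\xi'$ is continuous, which (A1) does not quite guarantee --- the Lagrange form of the mean value theorem, as used in the paper, suffices and avoids the issue.
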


\begin{proof}
We will only prove~\eqref{eq:mukappal}, since the contraction factor will strongly rely on this bound, but not on~\eqref{eq:mukappaU}. For the proof of the latter, we refer to~\cite{BarrettLiu:1993}. 

A simple and straightforward calculation reveals that
\begin{align} 
(\mu(|\underline{\kappa}|^2)\underline{\kappa}-\mu(|\underline{\tau}|^2)\underline{\tau}):(\underline{\kappa}-\underline{\tau}) &= \mu(|\ukappa|^2)|\ukappa|^2+\mu(|\utau|^2)|\utau|^2-\mu(|\ukappa|^2)\ukappa:\utau-\mu(|\utau|^2)\utau:\ukappa \nonumber \\ 
&=\left(\mu(|\ukappa|^2)|\ukappa|-\mu(|\utau|^2)|\utau|\right)(|\ukappa|-|\utau|) \label{eq:firstsummand} \\
& \quad + (\mu(|\ukappa|^2)+\mu(|\utau|^2))(|\ukappa||\utau|-\ukappa:\utau). \label{eq:secondsummand}
\end{align}
We note that the summand in~\eqref{eq:firstsummand} can be written as $(\xi(|\ukappa|)-\xi(|\utau|))(|\ukappa|-|\utau|)$, since $\xi(t)=\mu(t^2)t$ for $t \geq 0$. Then, the mean value theorem implies that
\begin{align} \label{eq:firstsumbound}
(\xi(|\ukappa|)-\xi(|\utau|))(|\ukappa|-|\utau|) \geq \inf_{t \in (0,1)} \xi'(|\ukappa|+t(|\utau|-|\ukappa|))(|\ukappa|-|\utau|)^2=c(\ukappa,\utau)(|\ukappa|-|\utau|)^2.
\end{align}
Furthermore, since $\mu'(t) \leq 0$ for all $t \geq 0$ by (A3), and, in turn, $\xi'(t)=\mu(t^2)+2t^2 \mu'(t) \leq \mu(t^2)$, we find that 
\[c(\ukappa,\utau)=\inf_{t \in (0,1)} \xi'(|\ukappa|+t(|\utau|-|\ukappa|)) \leq \min\left\{\mu(|\ukappa|^2),\mu(|\utau|^2)\right\}\!.\]
Consequently, the summand in~\eqref{eq:secondsummand} can be bounded from below by
\begin{align} \label{eq:secondsumbound}
(\mu(|\ukappa|^2)+\mu(|\utau|^2))(|\ukappa||\utau|-\ukappa:\utau) \geq 2c(\ukappa,\utau)(|\ukappa||\utau|-\ukappa:\utau),
\end{align}
since $|\ukappa||\utau|-\ukappa:\utau \geq 0$ by the Cauchy--Schwarz inequality. Hence, using the established bounds~\eqref{eq:firstsumbound} and~\eqref{eq:secondsumbound} for the summands in~\eqref{eq:firstsummand} and~\eqref{eq:secondsummand}, respectively, yields
\begin{align*}
(\mu(|\underline{\kappa}|^2)\underline{\kappa}-\mu(|\underline{\tau}|^2)\underline{\tau}):(\underline{\kappa}-\underline{\tau}) &\geq c(\ukappa,\utau)\left( (|\ukappa|-|\utau|)^2+2\left(|\ukappa||\utau|-\ukappa:\utau\right) \right) \\
&=c(\ukappa,\utau)\left(|\ukappa|^2+|\utau|^2-2 \ukappa:\utau\right) \\
&=c(\ukappa,\utau)|\ukappa-\utau|^2.
\end{align*}
Finally we note that dividing~\eqref{eq:a2} by $(t-s)$ and taking the limit $s \to t$ yields that $m_\mu \leq \xi'(t) \leq M_\mu$ for all $t \geq 0$, i.e., $c(\ukappa,\utau) \geq m_\mu$.
\end{proof}

Now we are ready to show that $\A$, cf.~\eqref{eq:Aoperator}, is strongly monotone and Lipschitz continuous, which implies the unique solvability of~\eqref{eq:directweak}. 

\begin{proposition} \label{lem:properties}
Let $\A$ be defined as in~\eqref{eq:Aoperator}, with $\mu$ satisfying {\rm (A1)--(A3)}.
\begin{enumerate}[(a)]
\item For given $\uv \in V$, $\A[\uv](\cdot)(\cdot)$ is a uniformly bounded and coercive, symmetric bilinear form on $V \times V$. In particular, the following inequalities hold:
\begin{align*} 
\A[\uv](\vv)(\wv) \leq M_\mu \nnn{\vv}_\Omega \nnn{\wv}_\Omega
\end{align*}
and 
\begin{align} \label{eq:coercive}
\A[\uv](\vv)(\vv) \geq m_\mu \nnn{\vv}_\Omega^2
\end{align}   
for any $\uv,\vv,\wv \in V$.
\item The mapping $\uv \mapsto \A[\uv](\uv)$ is Lipschitz continuous with
\begin{align} \label{eq:lipschitz}
\A[\mathbf{u}](\mathbf{u})(\mathbf{w})-\A[\mathbf{v}](\mathbf{v})(\mathbf{w}) \leq \sqrt{3} M_\mu \nnn{\uv-\vv}_\Omega \nnn{\wv}_\Omega, \qquad \uv,\vv,\wv \in V
\end{align}
and strongly monotone with
\begin{align} \label{eq:smonotone}
\A[\mathbf{u}](\mathbf{u})(\mathbf{u}-\mathbf{v})-\A[\mathbf{v}](\mathbf{v})(\mathbf{u}-\mathbf{v}) \geq m_\mu \nnn{\uv-\vv}_\Omega^2, \qquad \uv,\vv \in V.
\end{align} 
Consequently, the problem~\eqref{eq:directweak} has a unique solution $\uv^\star \in V$.
\end{enumerate} 
\end{proposition}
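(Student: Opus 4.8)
The plan is to derive everything in part (a) directly from the two-sided bound~\eqref{eq:mubd} on $\mu$, and everything in part (b) from the pointwise inequalities~\eqref{eq:mukappaU} and~\eqref{eq:mukappal} of the preceding Lemma, in each case lifting a pointwise estimate to an integral one by means of the Cauchy--Schwarz inequality. The one structural fact I will exploit throughout is the linearity of the rate-of-strain operator, so that $\e{u}-\e{v}=\ef{\uv-\vv}$; this is what allows me to identify the integrand differences appearing in $\A$ with the quantities controlled pointwise by the Lemma.

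For part (a), symmetry of the bilinear form $\A[\uv](\cdot)(\cdot)$ is immediate from the symmetry of the Frobenius inner product. For boundedness I would bound $\ef{\vv}:\ef{\wv}\le |\ef{\vv}|\,|\ef{\wv}|$ pointwise (Cauchy--Schwarz for the Frobenius product), use the upper bound $\mu\le M_\mu$ from~\eqref{eq:mubd}, and then apply the Cauchy--Schwarz inequality in $\Lom$ to conclude $\A[\uv](\vv)(\wv)\le M_\mu\nnn{\vv}_\Omega\nnn{\wv}_\Omega$. Coercivity~\eqref{eq:coercive} is even more direct: setting $\wv=\vv$ and using $\mu\ge m_\mu$ gives $\A[\uv](\vv)(\vv)=\int_\Omega \mu(|\e{u}|^2)|\ef{\vv}|^2\dx\ge m_\mu\nnn{\vv}_\Omega^2$.

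For part (b), I would first write the difference as a single integral, $\A[\uv](\uv)(\wv)-\A[\vv](\vv)(\wv)=\int_\Omega\bigl(\mu(|\e{u}|^2)\e{u}-\mu(|\e{v}|^2)\e{v}\bigr):\ef{\wv}\dx$. Applying~\eqref{eq:mukappaU} with $\ukappa=\e{u}(\x)$ and $\utau=\e{v}(\x)$ (using the crude bound $3M_\mu^2$) together with the Frobenius Cauchy--Schwarz inequality bounds the integrand pointwise by $\sqrt{3}\,M_\mu\,|\ef{\uv-\vv}|\,|\ef{\wv}|$, and a final Cauchy--Schwarz in $\Lom$ yields~\eqref{eq:lipschitz}. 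For strong monotonicity I would take $\wv=\uv-\vv$, so that $\ef{\wv}=\e{u}-\e{v}$, and apply~\eqref{eq:mukappal} pointwise (again with the coarse lower bound $m_\mu$), giving $\bigl(\mu(|\e{u}|^2)\e{u}-\mu(|\e{v}|^2)\e{v}\bigr):(\e{u}-\e{v})\ge m_\mu|\ef{\uv-\vv}|^2$; integrating over $\Omega$ produces~\eqref{eq:smonotone}. With Lipschitz continuity and strong monotonicity of the map $\uv\mapsto\A[\uv](\uv)$ in hand, unique solvability of~\eqref{eq:weakoperatorform}, equivalently~\eqref{eq:directweak}, follows from the standard theory of strongly monotone, Lipschitz continuous operators on a Hilbert space, as cited in~\cite[\S3.3]{Necas:86} or~\cite[\S25.4]{Zeidler:90}.

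I do not anticipate a genuine obstacle here: the entire analytic content has already been absorbed into the preceding Lemma, and the reduction of the integral estimates to the pointwise ones is routine. The only points demanding care are the bookkeeping of the Frobenius versus $\Lom$ applications of Cauchy--Schwarz, and correctly taking square roots when passing from the squared pointwise bound in~\eqref{eq:mukappaU} to the linear bound asserted in~\eqref{eq:lipschitz}.
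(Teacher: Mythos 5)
Your proposal is correct and follows essentially the same route as the paper's proof: part (a) from the bounds \eqref{eq:mubd} on $\mu$ together with the Cauchy--Schwarz inequality, and part (b) by lifting the pointwise estimates \eqref{eq:mukappaU} and \eqref{eq:mukappal} to integral ones via the linearity of $\ef{\cdot}$ and Cauchy--Schwarz, concluding with the standard theory of strongly monotone, Lipschitz continuous operators. No gaps.
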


\begin{proof}
Ad (a): By invoking the definition of $\A$, cf.~\eqref{eq:Aoperator}, the boundedness of the viscosity coefficient $\mu$, cf.~\eqref{eq:mubd}, and applying the Cauchy--Schwarz inequality twice, we obtain 
\begin{align*}
\A[\uv](\vv)(\wv) &= \int_\Omega \mu(|\eu|^2) \ev:\e{w} \dx \\
& \leq M_\mu \int_\Omega |\ev:\e{w}| \dx\\
& \leq M_\mu \left(\int_\Omega |\ev|^2 \dx\right)^{\nicefrac{1}{2}} \left(\int_\Omega |\e{w}|^2 \dx\right)^{\nicefrac{1}{2}} \\
& = M_\mu \nnn{\vv}_\Omega \nnn{\wv}_\Omega.
\end{align*}
Similarly, the inequality~\eqref{eq:mubd} implies the uniform coercivity~\eqref{eq:coercive}.

Ad (b): The definition of $\A$, cf.~\eqref{eq:Aoperator}, and the Cauchy--Schwarz inequality yield
\begin{align*}
\A[\mathbf{u}](\mathbf{u})(\mathbf{w})-\A[\mathbf{v}](\mathbf{v})(\mathbf{w})&=\int_\Omega \left[\mu(|\eu|^2)\eu-\mu(|\ev|^2)\ev\right]:\e{w} \dx\\
& \leq \int_\Omega |\mu(|\eu|^2)\eu-\mu(|\ev|^2)\ev||\e{w}| \dx .
\end{align*}
Hence, by~\eqref{eq:mukappaU} and the linearity of $\e{\cdot}$, this leads to
\begin{align*}
\A[\mathbf{u}](\mathbf{u})(\mathbf{w})-\A[\mathbf{v}](\mathbf{v})(\mathbf{w}) \leq \sqrt{3} M_\mu \int_\Omega |\ef{\uv-\vv}||\e{w}| \dx.
\end{align*}
Applying once more the Cauchy--Schwarz inequality implies that
\begin{align*}
\A[\mathbf{u}](\mathbf{u})(\mathbf{w})-\A[\mathbf{v}](\mathbf{v})(\mathbf{w})& \leq \sqrt{3} M_\mu \left(\int_\Omega |\e{u-v}|^2 \dx\right)^{\nicefrac{1}{2}} \left(\int_\Omega |\e{w}|^2 \dx\right)^{\nicefrac{1}{2}} \\ &= \sqrt{3} M_\mu \nnn{\mathbf{u}-\mathbf{v}}_\Omega \nnn{\mathbf{w}}_\Omega.
\end{align*}
Similarly, by the definition of $\A$, cf.~\eqref{eq:Aoperator}, and~\eqref{eq:mukappal} we obtain
\begin{align*}
\A[\uv](\uv)(\uv-\vv)-\A[\vv](\vv)(\uv-\vv)&=\int_\Omega \left(\mu(|\eu|^2)\eu-\mu(|\ev|^2)\ev\right):(\e{u}-\e{v}) \dx \\
& \geq m_\mu \int_\Omega |\e{u-v}|^2 \dx \\
& = m_\mu \nnn{\uv-\vv}_\Omega^2.
\end{align*}
The existence and uniqueness of a solution to the equation~\eqref{eq:directweak} now follows from the theory of monotone operators, cf.~\cite[\S3.3]{Necas:86} or~\cite[\S25.4]{Zeidler:90}.
\end{proof}

\begin{remark}
Since~\eqref{eq:directweak} is the Euler--Lagrange equation of the minimisation problem
\begin{align*} 
\min_{\uv \in V} \E(\uv),
\end{align*} 
the above proposition yields that $\uv^\star \in V$ is the unique minimiser of the functional $\E$. Moreover, the Ka\v{c}anov scheme~\eqref{eq:kacanov} is well defined thanks to Proposition~\ref{lem:properties} (a) and the Lax--Milgram theorem.
\end{remark}

\section{Energy contraction} \label{sec:energycontraction}

In this section, we will show that the energy error, given by $\E(\uv^n)-\E(\uv^\star)$, contracts along the sequence $\{\uv^n\}$ generated by the Ka\v{c}anov iteration~\eqref{eq:kacanov}. To this end, we need an auxiliary result.

\begin{lemma} 
Let $\A$ and $\E$ be defined as in \eqref{eq:Aoperator} and \eqref{eq:Edeff}, respectively, with $\mu$ satisfying {\rm (A1)--(A3)}. Then
\begin{align} \label{eq:keyin}
\E(\uv)-\E(\vv) \geq \frac{1}{2} \A[\uv](\uv)(\uv)-\frac{1}{2}\A[\uv](\vv)(\vv)-\ell(\uv)+\ell(\vv), \qquad \uv,\vv \in V.
\end{align}
\end{lemma}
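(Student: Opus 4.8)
The plan is to unfold the definitions of $\E$ from~\eqref{eq:Edeff} and of $\A$ from~\eqref{eq:Aoperator}, and to notice that the contributions of the linear form $\ell$ cancel on the two sides of~\eqref{eq:keyin}, thereby reducing the assertion to a pointwise inequality for the scalar function $\varphi$. Indeed, $\E(\uv)=\int_\Omega \varphi(|\eu|^2)\dx-\ell(\uv)$, while $\A[\uv](\uv)(\uv)=\int_\Omega \mu(|\eu|^2)|\eu|^2\dx$ and $\A[\uv](\vv)(\vv)=\int_\Omega \mu(|\eu|^2)|\ev|^2\dx$. Hence the terms $-\ell(\uv)+\ell(\vv)$ occur identically on both sides, and~\eqref{eq:keyin} is equivalent to
\[
\int_\Omega \left(\varphi(|\eu|^2)-\varphi(|\ev|^2)\right)\dx \geq \frac{1}{2}\int_\Omega \mu(|\eu|^2)\left(|\eu|^2-|\ev|^2\right)\dx.
\]
I would then prove this by establishing the integrand inequality pointwise almost everywhere in $\Omega$: fixing $\x$ and writing $a:=|\eu(\x)|^2$ and $b:=|\ev(\x)|^2$, it suffices to show that $\varphi(a)-\varphi(b)\geq \tfrac12\mu(a)(a-b)$ for all $a,b\geq 0$.

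The key structural input is assumption (A3). By the definition of $\varphi$ we have $\varphi'(s)=\tfrac12\mu(s)$, and since $\mu$ is nonincreasing by (A3), the derivative $\varphi'$ is nonincreasing; that is, $\varphi$ is concave on $\mathbb{R}_{\geq0}$. For a concave function the graph lies below each of its tangent lines, so that $\varphi(b)\leq \varphi(a)+\varphi'(a)(b-a)$. Rearranging this and substituting $\varphi'(a)=\tfrac12\mu(a)$ yields exactly $\varphi(a)-\varphi(b)\geq \tfrac12\mu(a)(a-b)$, as desired. Equivalently, I could argue directly via the fundamental theorem of calculus, writing $\varphi(a)-\varphi(b)=\tfrac12\int_b^a \mu(t)\,\mathrm{d}t$ and comparing $\mu(t)$ with $\mu(a)$ on the interval with endpoints $a$ and $b$ (treating the cases $a\geq b$ and $a<b$ separately); the monotonicity of $\mu$ gives the bound in either case. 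Integrating the pointwise estimate over $\Omega$ and reinstating the cancelled $\ell$-terms then recovers~\eqref{eq:keyin}.

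I do not anticipate a serious obstacle: the only real point is to recognise that the claimed inequality is precisely the first-order (tangent-line) characterisation of concavity for $\varphi$, and that the concavity of $\varphi$ is in turn equivalent to the shear-thinning hypothesis (A3). It is worth stressing that this is the one step where the sign condition $\mu'\leq0$ is used in an essential way, rather than merely the two-sided bound~\eqref{eq:a2}; indeed, without monotonicity of $\mu$ the function $\varphi$ need not be concave and the inequality could fail.
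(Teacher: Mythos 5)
Your proof is correct and follows essentially the same route as the paper's: both reduce \eqref{eq:keyin} to the pointwise scalar inequality $\varphi(t)-\varphi(s)\geq\tfrac12\mu(t)(t-s)$ for $t,s\geq 0$ (after observing that the $\ell$-terms cancel and that $\A[\uv](\uv)(\uv)-\A[\uv](\vv)(\vv)=\int_\Omega \mu(|\eu|^2)(|\eu|^2-|\ev|^2)\dx$), and then integrate over $\Omega$. The only difference is that the paper cites an external reference for this scalar inequality, whereas you supply its short proof via the concavity of $\varphi$ (the tangent-line bound, using $\varphi'=\tfrac12\mu$ and (A3)), which is a correct and self-contained way to obtain it.
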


This result is well-known for the Ka\v{c}anov iteration in the given setting, and the proof can be found, e.g., in \cite[\S 25.12]{Zeidler:90} or~\cite[\S 4.5]{Necas:86}. However, as it is stated in a slightly different form in those references, and also for the sake of completeness, we will include the proof of this statement nonetheless.

\begin{proof}
It can be shown that
\[\varphi(t)-\varphi(s) \geq \frac{1}{2} \mu(t) (t-s), \qquad t,s \geq 0,\]
see, e.g.,~\cite[\S 5.1]{HeidWihler:19v2}, and therefore
\begin{align*}
\int_\Omega \varphi(|\eu|^2)-\varphi(|\ev|^2) \dx &\geq \frac{1}{2} \int_\Omega \mu(|\eu|^2)(|\eu|^2-|\ev|^2)\dx \\
&= \frac{1}{2} \left(\A[\uv](\uv)(\uv)-\A[\uv](\vv)(\vv)\right),
\end{align*} 
for any $\uv,\vv \in V$. Hence, by the definition of $\E$, cf.~\eqref{eq:Edeff}, we find that
\begin{align*}
\E(\uv)-\E(\vv)&=\int_\Omega \varphi(|\eu|^2) \dx -\int_\Omega \varphi(|\ev|^2) \dx -\ell(\uv)+\ell(\vv)\\
 &\geq \frac{1}{2} \A[\uv](\uv)(\uv)-\frac{1}{2}\A[\uv](\vv)(\vv)-\ell(\uv)+\ell(\vv),
\end{align*}
which completes the proof of the claim.
\end{proof}

Now we are in a position to prove the contraction of the energy along the sequence generated by the Ka\v{c}anov scheme~\eqref{eq:kacanov}. We note that similar results can be found, e.g., in~\cite[Prop.~2.1]{HeidPraetoriusWihler:2020} or ~\cite[Cor.~19]{Diening:2020}.  

\begin{theorem} \label{thm:globalcontraction}
Assume that {\rm (A1)--(A3)} hold and let $\E$ be defined as in~\eqref{eq:Edeff}. Then, the energy error contracts along the sequence $\{\uv^n\}$ generated by the Ka\v{c}anov iteration~\eqref{eq:kacanov} in the sense that 
\begin{align*} 
0\leq\E(\uv^{n+1})-\E(\uv^\star) \leq q(n) \left(\E(\uv^n)-\E(\uv^\star)\right)\!,
\end{align*}
where
\begin{align} \label{eq:contractionfactor}
q(n):=1-\frac{1}{4} \left\{\esssup_{\x \in \Omega} \frac{\mu(|\ef{\uv^n}|^2)}{\inf_{t \in (-1,1)}\xi'(\max\left\{0,|\ef{\uv^\star}|+t|\ef{\uv^n}-\ef{\uv^\star}|\right\})}\right\}^{-1},
\end{align}
and $\xi(t)=\mu(t^2)t$ for $t \geq 0$.
\end{theorem}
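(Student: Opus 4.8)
The plan is to establish the equivalent inequality $\E(\uv^n)-\E(\uv^{n+1})\ge(1-q(n))\left(\E(\uv^n)-\E(\uv^\star)\right)$, from which the contraction follows at once via $\E(\uv^{n+1})-\E(\uv^\star)=\left(\E(\uv^n)-\E(\uv^\star)\right)-\left(\E(\uv^n)-\E(\uv^{n+1})\right)$. Throughout I abbreviate $a_n(\vv,\wv):=\A[\uv^n](\vv)(\wv)$, which by Proposition~\ref{lem:properties}(a) is a symmetric, coercive bilinear form --- hence an inner product --- on $V$, with induced norm $\norm{\vv}_{a_n}:=a_n(\vv,\vv)^{1/2}$; I write $D$ for the essential supremum appearing in~\eqref{eq:contractionfactor}, so that $1-q(n)=\frac{1}{4D}$. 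The lower bound $\E(\uv^{n+1})-\E(\uv^\star)\ge0$ is immediate, since $\uv^\star$ minimises $\E$.

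First I would bound the energy decrease from below. Inserting $\uv=\uv^n$, $\vv=\uv^{n+1}$ into~\eqref{eq:keyin} and using the Ka\v{c}anov identity $\A[\uv^n](\uv^{n+1})(\vv)=\ell(\vv)$ to remove $\ell$, the right-hand side reduces, by completion of squares for the quadratic functional $\vv\mapsto\frac{1}{2}a_n(\vv,\vv)-\ell(\vv)$ whose minimiser is $\uv^{n+1}$, to
\begin{align*}
\E(\uv^n)-\E(\uv^{n+1})\ge\frac{1}{2}\,a_n(\uv^n-\uv^{n+1},\uv^n-\uv^{n+1})=\frac{1}{2}\norm{\uv^n-\uv^{n+1}}_{a_n}^2.
\end{align*}

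Next I would bound the energy error from above. Since $\E$ is convex (its G\^ateaux derivative $\uv\mapsto\A[\uv](\uv)-\ell$ being monotone by Proposition~\ref{lem:properties}(b)), the subgradient inequality gives $\E(\uv^n)-\E(\uv^\star)\le\E'(\uv^n)(\uv^n-\uv^\star)$. By~\eqref{eq:Efrechet2} together with the Ka\v{c}anov identity, $\E'(\uv^n)(\vv)=a_n(\uv^n-\uv^{n+1},\vv)$ for every $\vv\in V$, whence the Cauchy--Schwarz inequality for $a_n$ yields
\begin{align*}
\E(\uv^n)-\E(\uv^\star)\le a_n(\uv^n-\uv^{n+1},\uv^n-\uv^\star)\le\norm{\uv^n-\uv^{n+1}}_{a_n}\norm{\uv^n-\uv^\star}_{a_n}.
\end{align*}
The crucial remaining step is the stability bound $\norm{\uv^n-\uv^\star}_{a_n}\le D\norm{\uv^n-\uv^{n+1}}_{a_n}$. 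To obtain it I evaluate the same quantity $\E'(\uv^n)(\uv^n-\uv^\star)=\int_\Omega\left(\mu(|\ef{\uv^n}|^2)\ef{\uv^n}-\mu(|\ef{\uv^\star}|^2)\ef{\uv^\star}\right):\left(\ef{\uv^n}-\ef{\uv^\star}\right)\dx$ and bound its integrand from below, pointwise, by the sharp monotonicity estimate~\eqref{eq:mukappal} with $\ukappa=\ef{\uv^n}$ and $\utau=\ef{\uv^\star}$, giving $\E'(\uv^n)(\uv^n-\uv^\star)\ge\int_\Omega c(\ef{\uv^n},\ef{\uv^\star})|\ef{\uv^n}-\ef{\uv^\star}|^2\dx$, where $c\ge m_\mu>0$. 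Since $\norm{\uv^n-\uv^\star}_{a_n}^2=\int_\Omega\mu(|\ef{\uv^n}|^2)|\ef{\uv^n}-\ef{\uv^\star}|^2\dx$, a weight comparison (pulling $\esssup_{\x}\frac{\mu(|\ef{\uv^n}|^2)}{c(\ef{\uv^n},\ef{\uv^\star})}\le D$ out of the integral) gives $\norm{\uv^n-\uv^\star}_{a_n}^2\le D\int_\Omega c(\ef{\uv^n},\ef{\uv^\star})|\ef{\uv^n}-\ef{\uv^\star}|^2\dx$. Chaining the three estimates for $\E'(\uv^n)(\uv^n-\uv^\star)$ and cancelling one factor of $\norm{\uv^n-\uv^\star}_{a_n}$ produces the stability bound; consequently $\E(\uv^n)-\E(\uv^\star)\le D\norm{\uv^n-\uv^{n+1}}_{a_n}^2\le2D\left(\E(\uv^n)-\E(\uv^{n+1})\right)$, i.e.\ $\E(\uv^n)-\E(\uv^{n+1})\ge\frac{1}{2D}\left(\E(\uv^n)-\E(\uv^\star)\right)\ge(1-q(n))\left(\E(\uv^n)-\E(\uv^\star)\right)$, which is the claim (indeed with the sharper factor $1-\frac{1}{2D}$).

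The main obstacle is the weight comparison, i.e.\ verifying that the denominator $\inf_{t\in(-1,1)}\xi'(\max\{0,|\ef{\uv^\star}|+t|\ef{\uv^n}-\ef{\uv^\star}|\})$ defining $D$ is a genuine pointwise lower bound for the sharp monotonicity constant $c(\ef{\uv^n},\ef{\uv^\star})=\inf_{t\in(0,1)}\xi'(|\ef{\uv^n}|+t(|\ef{\uv^\star}|-|\ef{\uv^n}|))$ from~\eqref{eq:mukappaconstl}, for only then is $\frac{\mu(|\ef{\uv^n}|^2)}{c(\ef{\uv^n},\ef{\uv^\star})}\le D$ a.e.\ in $\Omega$. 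This is the geometric heart of the argument: by the reverse triangle inequality $\bigl||\ef{\uv^n}|-|\ef{\uv^\star}|\bigr|\le|\ef{\uv^n}-\ef{\uv^\star}|$, the symmetric interval of radius $|\ef{\uv^n}-\ef{\uv^\star}|$ about $|\ef{\uv^\star}|$ contains the whole segment joining $|\ef{\uv^\star}|$ and $|\ef{\uv^n}|$, so the infimum of $\xi'$ over the former set of arguments (the role of $\max\{0,\cdot\}$ being merely to keep the evaluation point in the domain $[0,\infty)$ of $\xi'$) cannot exceed the infimum defining $c(\ef{\uv^n},\ef{\uv^\star})$. A minor point to record is that $D\ge1$, so that $q(n)\in(0,1)$; this follows because $\xi'(t)=\mu(t^2)+2t^2\mu'(t)\le\mu(t^2)$ (by (A3)) forces the denominator to be no larger than $\mu(|\ef{\uv^n}|^2)$.
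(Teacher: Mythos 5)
Your proof is correct, and it reaches the stated conclusion by a genuinely different combination of the same ingredients, ending with the sharper factor $1-\tfrac{1}{2D}$ in place of the paper's $1-\tfrac{1}{4D}$. The paper bounds $\E(\uv^n)-\E(\uv^\star)\le \A[\uv^n](\uv^n-\uv^{n+1})(\uv^n-\uv^\star)$ exactly as you do, but then splits the cross term by Young's inequality with a free parameter $\gamma$, estimates the two resulting quadratic terms separately --- the $\uv^n-\uv^{n+1}$ term by the energy decrease via \eqref{eq:keyin}, and the $\uv^n-\uv^\star$ term by $2Q(n)\,(\E(\uv^n)-\E(\uv^\star))$ using the integral representation $\E(\uv^n)-\E(\uv^\star)=\int_0^1\E'(\uv^\star+t(\uv^n-\uv^\star))(\uv^n-\uv^\star)\,\mathrm{d}t$ together with \eqref{eq:mukappal} applied at every intermediate pair $(\ef{\uv^\star}+t(\ef{\uv^n}-\ef{\uv^\star}),\ef{\uv^\star})$ --- and finally optimises over $\gamma$, which is where the $\tfrac14$ arises (the $\int_0^1 t\,\mathrm{d}t=\tfrac12$ also enters there). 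You instead apply Cauchy--Schwarz in the $a_n$-inner product and close the argument with the stability bound $\norm{\uv^n-\uv^\star}_{a_n}\le D\,\norm{\uv^n-\uv^{n+1}}_{a_n}$, which compares $\norm{\uv^n-\uv^\star}_{a_n}^2$ directly with the duality pairing $a_n(\uv^n-\uv^{n+1},\uv^n-\uv^\star)$ rather than with the energy error; this removes one lossy step and only requires the monotonicity estimate \eqref{eq:mukappal} at the single endpoint pair $(\ef{\uv^n},\ef{\uv^\star})$, so your weight-comparison containment (segment from $|\ef{\uv^n}|$ to $|\ef{\uv^\star}|$ inside the symmetric interval of radius $|\ef{\uv^n}-\ef{\uv^\star}|$ about $|\ef{\uv^\star}|$) is a strictly easier version of the paper's inequality \eqref{eq:cin}, which must hold uniformly in $s\in[0,1]$. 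Two points you gloss over are harmless: the cancellation of $\norm{\uv^n-\uv^\star}_{a_n}$ requires the trivial case distinction when it vanishes, and the claim $D\ge 1$ uses that $|\ef{\uv^n}|$ is attained only in the closure of the parameter interval $(-1,1)$, which continuity of $\xi'$ resolves. Since $D\ge 1$ gives $1-\tfrac{1}{2D}\le 1-\tfrac{1}{4D}=q(n)$, your argument proves the theorem as stated and in fact slightly more.
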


\begin{proof}
We largely proceed along the lines of~\cite{Diening:2020}. However, as we want to improve the contraction factor from this reference and, in addition, remove any unknown constants, some non-trivial modifications are necessary in the second part of the proof. 

Let us define the real-valued function $\psi(t):=\E(\uv^\star+t(\uv^n-\uv^\star))$, $t \in [0,1]$. Then, by invoking the fundamental theorem of calculus, we obtain
\begin{align*}
\E(\uv^n)-\E(\uv^\star)=\int_0^1 \psi'(t) \, \mathrm{d} t.
\end{align*}
It can be shown that $\psi'(t), \, t \in [0,1]$, is increasing, and therefore
\begin{align*}
\E(\uv^n)-\E(\uv^\star) \leq \psi'(1)= \E'(\uv^n)(\uv^n-\uv^\star)=\A[\uv^n](\uv^n)(\uv^n-\uv^\star) - \ell(\uv^n-\uv^\star).
\end{align*}
Moreover, by the definition of the Ka\v{c}anov scheme~\eqref{eq:kacanov}, we have that 
\[ \A[\uv^n](\uv^{n+1})(\uv^n-\uv^\star)=\ell(\uv^n-\uv^\star).\] 
Consequently, the above inequality becomes
\begin{align*}
\E(\uv^n)-\E(\uv^\star) &\leq \A[\uv^n](\uv^n-\uv^{n+1})(\uv^n-\uv^\star) =\int_\Omega \mu(|\ef{\uv^n}|^2) \ef{\uv^n-\uv^{n+1}}:\ef{\uv^n-\uv^\star} \dx.
\end{align*}
Let us recall that, for $a,b \geq 0$, $ab \leq \frac{1}{2\gamma}a^2+\frac{\gamma}{2}b^2$ for all $\gamma >0$: Indeed, this holds true as the function $\gamma \mapsto \frac{1}{2\gamma}a^2+\frac{\gamma}{2}b^2$ takes its minimum $ab$ at $\gamma=\nicefrac{a}{b}$. Consequently, we obtain 
\begin{align} \label{eq:thmsummands}
\E(\uv^n)-\E(\uv^\star) \leq \frac{1}{2\gamma} \int_\Omega \mu(|\ef{\uv^n}|^2)|\ef{\uv^n-\uv^{n+1}}|^2 \dx + \frac{\gamma}{2} \int_\Omega \mu(|\ef{\uv^n}|^2)|\ef{\uv^n-\uv^{\star}}|^2 \dx.
\end{align}
We will now examine the two summands on the right-hand side above separately.

The first summand can be bounded from above in a similar manner as in the proof of~\cite[Thm.~18]{Diening:2020}. First, we note that
\begin{align*}
\int_\Omega \mu(|\ef{\uv^n}|^2)|\ef{\uv^n-\uv^{n+1}}|^2 \dx &= \int_\Omega \mu(|\ef{\uv^n}|^2)|\ef{\uv^n}|^2 \dx- \int_\Omega\mu(|\ef{\uv^n}|^2)|\ef{\uv^{n+1}}|^2 \dx \\
& \quad -2 \int_\Omega \mu(|\ef{\uv^n}|^2)\ef{\uv^{n+1}}:\ef{\uv^{n}} \dx \\
& \quad +2 \int_\Omega\mu(|\ef{\uv^n}|^2)\ef{\uv^{n+1}}:\ef{\uv^{n+1}} \dx,
\end{align*}
and thus, by the definition of the operator $\A$, cf.~\eqref{eq:Aoperator},
\begin{align*}
\int_\Omega \mu(|\ef{\uv^n}|^2)|\ef{\uv^n-\uv^{n+1}}|^2 \dx &= \A[\uv^n](\uv^n)(\uv^n)-\A[\uv^n](\uv^{n+1})(\uv^{n+1}) \\
&\quad -2 \A[\uv^n](\uv^{n+1})(\uv^n)+2 \A[\uv^n](\uv^{n+1})(\uv^{n+1}).
\end{align*}
Recall that $\A[\uv^n](\uv^{n+1})(\vv)=\ell(\vv)$ for all $\vv \in V$, cf.~\eqref{eq:kacanov}, which leads to
\begin{align*} 
\int_\Omega \mu(|\ef{\uv^n}|^2)|\ef{\uv^n-\uv^{n+1}}|^2 \dx &= \A[\uv^n](\uv^n)(\uv^n)-\A[\uv^n](\uv^{n+1})(\uv^{n+1})-2 \ell(\uv^n)+2 \ell(\uv^{n+1}),
\end{align*}
and hence, by~\eqref{eq:keyin},  
\begin{align} \label{eq:thmfirstsummand}
\frac{1}{2}\int_\Omega \mu(|\ef{\uv^n}|^2)|\ef{\uv^n-\uv^{n+1}}|^2 \dx\leq \E(\uv^n)-\E(\uv^{n+1}).
\end{align}

Next, we will take care of the second summand in~\eqref{eq:thmsummands}. As was done in~\cite{Diening:2020}, we want to bound this summand in terms of the energy difference $\E(\uv^n)-\E(\uv^\star)$. However, in order to improve the contraction factor whilst removing all unknown constants, some modifications to the argument presented in~\cite{Diening:2020} are necessary. For $\psi(t)=\E(\uv^\star+t(\uv^n-\uv^\star))$, the fundamental theorem of calculus implies that
\begin{align*}
\E(\uv^n)-\E(\uv^\star) = \int_0^1 \psi'(t) \, \mathrm{d} t = \int_0^1 \E'(\uv^\star+t(\uv^n-\uv^\star))(\uv^n-\uv^\star) \, \mathrm{d} t.
\end{align*} 
Recall that $\E'(\uv)(\vv)=\A[\uv][\uv](\vv)-\ell(\vv)$, cf.~\eqref{eq:Efrechet2}, and, since $\uv^\star \in V$ is the unique solution of~\eqref{eq:weakoperatorform}, $\ell(\vv)=\A[\uv^\star](\uv^\star)(\vv)$ for any $\vv \in V$. As a consequence, we have that
\begin{align*}
\E(\uv^n)-\E(\uv^\star) &= \int_0^1 \left(\A[\uv^\star+t(\uv^n-\uv^\star)](\uv^\star+t(\uv^n-\uv^\star))-\A[\uv^\star](\uv^\star)\right)(\uv^n-\uv^\star) \, \mathrm{d} t. 
\end{align*}
Invoking the definition of $\A$, cf.~\eqref{eq:Aoperator}, and~\eqref{eq:mukappal} further implies that
\begin{align} \label{eq:eq1}
\E(\uv^n)-\E(\uv^\star) &\geq \int_0^1 t \int_\Omega c(\ef{\uv^\star}+t(\ef{\uv^n}-\ef{\uv^\star}),\ef{\uv^\star}) |\ef{\uv^n-\uv^\star}|^2 \dx \, \mathrm{d}t. 
\end{align}
Moreover, by the definition of $c(\cdot,\cdot)$,~cf.~\eqref{eq:mukappaconstl}, and a brief argument based on reflection we get
\begin{align} \label{eq:cin}
c(\ef{\uv^\star}+s(\ef{\uv^n}-\ef{\uv^\star}),\ef{\uv^\star}) \geq \inf_{t \in (-1,1)} \xi'(\max\left\{0,|\ef{\uv^\star}|+t|\ef{\uv^n}-\ef{\uv^\star}|\right\})
\end{align}
for all $s \in [0,1]$, where $\xi(t)=\mu(t^2)t$; indeed, it is easily verified that, for any $\underline{\kappa},\underline{\tau} \in \mathbb{R}^{d\times d}$, we have $c(\ukappa,\utau)=c(\utau,\ukappa)$, and, in turn, 
\begin{align} \label{eq:eq2}
c(\ukappa+s(\utau-\ukappa),\ukappa)=\inf_{t \in (0,1)}\xi'((1-t) |\ukappa|+t|\ukappa+s(\utau-\ukappa)|), \qquad s \in [0,1].
\end{align} 
The triangle inequality yields that 
\[|\ukappa|-ts|\utau-\ukappa| \leq (1-t)|\ukappa|+t|\ukappa+s(\utau-\ukappa)| \leq |\ukappa|+ts|\utau-\ukappa| \qquad \text{for all} \ t \in (0,1),\] 
and thus
\[
(1-t)|\ukappa|+t|\ukappa+s(\utau-\ukappa)|=|\ukappa|+(2r-1)st|\ukappa-\utau| \qquad \text{for some} \ r \in [0,1].\] This further implies that, for any $s \in [0,1]$, we have
\[
\{(1-t)|\ukappa|+t|\ukappa+s(\utau-\ukappa)|: t \in (0,1)\} \subseteq \{|\ukappa|+t |\ukappa - \utau|:t \in (-1,1)\},
\]
and consequently, in regard of~\eqref{eq:eq2}, $c(\ukappa+s(\utau-\ukappa),\ukappa) \geq \inf_{t \in (-1,1)} \xi'(\max\left\{0,|\ukappa|+ts|\utau-\ukappa|\right\})$, which immediately implies~\eqref{eq:cin}. Combining the equalities~\eqref{eq:eq1} and~\eqref{eq:cin} yields
\begin{align} \label{eq:difinf}
\E(\uv^n)-\E(\uv^\star) \geq \frac{1}{2} \int_\Omega \inf_{t \in (-1,1)} \xi'(\max\left\{0,|\ef{\uv^\star}|+t|\ef{\uv^n}-\ef{\uv^\star}|\right\}) |\ef{\uv^n-\uv^\star}|^2 \dx.
\end{align}
Since, in addition,
\begin{align*} 
\frac{1}{2} \int_\Omega \mu(|\ef{\uv^n}|^2)|\ef{\uv^n-\uv^{\star}}|^2 \dx &= \frac{1}{2} \int_\Omega \frac{\mu(|\ef{\uv^n}|^2)}{\inf_{t \in (-1,1)} \xi'(\max\left\{0,|\ef{\uv^\star}|+t|\ef{\uv^n}-\ef{\uv^\star}|\right\})} \\
&\quad \cdot \inf_{t \in (-1,1)} \xi'(\max\left\{0,|\ef{\uv^\star}|+t|\ef{\uv^n}-\ef{\uv^\star}|\right\})|\ef{\uv^n-\uv^\star}|^2 \dx,
\end{align*}
the lower bound~\eqref{eq:difinf} implies that
\begin{align} \label{eq:summandtwo}
\frac{1}{2} \int_\Omega \mu(|\ef{\uv^n}|^2)|\ef{\uv^n-\uv^{\star}}|^2 \dx \leq Q(n) (\E(\uv^n)-\E(\uv^\star)),
\end{align}
where 
\begin{align*}
Q(n):=\esssup_{\x \in \Omega} \frac{\mu(|\ef{\uv^n}|^2)}{\inf_{t \in (-1,1)} \xi'(\max\left\{0,|\ef{\uv^\star}|+t|\ef{\uv^n}-\ef{\uv^\star}|\right\})}.
\end{align*}
Finally, combining~\eqref{eq:thmsummands}, \eqref{eq:thmfirstsummand}, and~\eqref{eq:summandtwo} yields
\begin{align*}
\gamma (1-\gamma Q(n)) \left(\E(\uv^n)-\E(\uv^\star)\right) \leq \E(\uv^n)-\E(\uv^{n+1}), 
\end{align*}
and, in turn,
\begin{align*}
\E(\uv^{n+1})-\E(\uv^\star)
&=\E(\uv^n)-\E(\uv^\star)-\left(\E(\uv^n)-\E(\uv^{n+1})\right)  \leq (1-\gamma(1-\gamma Q(n)))(\E(\uv^n)-\E(\uv^\star)). 
\end{align*}
It is straightforward to verify that the contraction factor is minimal for $\gamma=\nicefrac{1}{2Q(n)}$, and, in that case, one has that
\begin{align*}
0 \leq \E(\uv^{n+1})-\E(\uv^\star) \leq \left(1-\frac{1}{4 Q(n)}\right) \left(\E(\uv^n)-\E(\uv^\star)\right)\!,
\end{align*}
which proves the claim.
\end{proof}

\begin{remark} \label{rem:boundconfac}
Since $m_\mu \leq \mu(t) \leq M_\mu$ as well as $m_\mu \leq \xi'(t) \leq M_\mu$ for all $t \geq 0$, we get 
the following rough uniform bound on the contraction factor: 
\[q(n) \leq \left(1-\frac{m_\mu}{4 M_\mu}\right) \in [0.75,1), \qquad n \geq 0.\]
We note that, in the context of the relaxed power-law model, cf. \S\ref{sec:powerlaw}, this bound, in principle, coincides with the contraction factor from~\cite{Diening:2020}. 
\end{remark}

We note that the contraction factor~\eqref{eq:contractionfactor} is not computable as it involves $\uv^\star$, and the uniform upper bound from Remark~\ref{rem:boundconfac} is rather pessimistic. In the following, we will establish an improved computable bound, up to higher order error terms, for the contraction factor on finite-dimensional subspaces.

\begin{theorem} \label{thm:globalcontractioncomp}
Assume that {\rm (A1)--(A3)} hold, let $W \subset V$ be a finite-dimensional subspace, and let $\E$ be defined as in~\eqref{eq:Edeff} (restricted to $W$). Then, the energy error contracts along the sequence $\{\uv^n\} \subset W$  generated by the Ka\v{c}anov iteration~\eqref{eq:kacanov} on $W$ in the sense that 
\begin{align*} 
0\leq\E(\uv^{n+1})-\E(\uv^\star) \leq q_A(n) \left(\E(\uv^n)-\E(\uv^\star)\right)+o_W(\nnn{\uv^\star-\uv^n}_\Omega^2),
\end{align*}
where now $\uv^\star$ denotes the unique minimiser of $\E$ in $W$ and
\begin{align} \label{eq:contractionfactorcomp}
q_A(n):=1-\frac{1}{4} \left\{\esssup_{\x \in \Omega} \frac{\mu(|\ef{\uv^n}|^2)}{2 \mu'(|\ef{\uv^n}|^2) |\ef{\uv^n}|^2+\mu(|\ef{\uv^n}|^2)}\right\}^{-1}.
\end{align}
\end{theorem}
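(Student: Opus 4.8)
The plan is to re-run the proof of Theorem~\ref{thm:globalcontraction} essentially verbatim, departing from it only in the treatment of the second summand in~\eqref{eq:thmsummands}. The crucial observation is that the denominator in the new, computable factor~\eqref{eq:contractionfactorcomp} is nothing but $\xi'$ evaluated at $|\ef{\uv^n}|$: since $\xi(t)=\mu(t^2)t$, one has $\xi'(t)=\mu(t^2)+2t^2\mu'(t)$, whence $\xi'(|\ef{\uv^n}|)=2\mu'(|\ef{\uv^n}|^2)|\ef{\uv^n}|^2+\mu(|\ef{\uv^n}|^2)$. Thus $q_A(n)$ arises from $q(n)$ of~\eqref{eq:contractionfactor} precisely by replacing the $\uv^\star$-dependent infimum $\inf_{t\in(-1,1)}\xi'(\max\{0,|\ef{\uv^\star}|+t|\ef{\uv^n}-\ef{\uv^\star}|\})$ in the denominator by its value $\xi'(|\ef{\uv^n}|)$ at the current iterate. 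The task is therefore to show that, on a finite-dimensional $W$, this replacement costs only a higher-order term.

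Concretely, I would start from~\eqref{eq:difinf} and set $\delta(\x):=|\ef{\uv^n}(\x)-\ef{\uv^\star}(\x)|$. By the triangle inequality, every value $\max\{0,|\ef{\uv^\star}|+t|\ef{\uv^n}-\ef{\uv^\star}|\}$, $t\in(-1,1)$, lies within distance $2\delta(\x)$ of $|\ef{\uv^n}|$, and the point $|\ef{\uv^n}|$ itself is attained in the closure of this range, since $\abs{|\ef{\uv^n}|-|\ef{\uv^\star}|}\le\delta(\x)$. Hence the infimum is at most $\xi'(|\ef{\uv^n}|)$; writing $R(\x)\ge 0$ for the difference, continuity of $\xi'$ (guaranteed by (A1)) yields $R(\x)\le\omega(2\delta(\x))$, where $\omega$ is a modulus of continuity of $\xi'$ on a fixed compact interval containing all relevant arguments once $\uv^n$ is near $\uv^\star$. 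Substituting $\inf(\cdots)=\xi'(|\ef{\uv^n}|)-R(\x)$ into~\eqref{eq:difinf} and then arguing exactly as in the passage from~\eqref{eq:difinf} to~\eqref{eq:summandtwo} gives
\begin{align*}
\frac{1}{2}\int_\Omega\mu(|\ef{\uv^n}|^2)|\ef{\uv^n-\uv^\star}|^2\dx\le Q_A(n)\left(\E(\uv^n)-\E(\uv^\star)\right)+\frac{Q_A(n)}{2}\int_\Omega\omega(2\delta(\x))\,\delta(\x)^2\dx,
\end{align*}
where $Q_A(n):=\esssup_{\x\in\Omega}\mu(|\ef{\uv^n}|^2)/\xi'(|\ef{\uv^n}|)\le\nicefrac{M_\mu}{m_\mu}$ is uniformly bounded. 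Thus the second integral is the only new ingredient compared with Theorem~\ref{thm:globalcontraction}.

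It remains to certify that this extra term is $o_W(\nnn{\uv^\star-\uv^n}_\Omega^2)$, and this is exactly where the finite dimensionality of $W$ enters. On $W$ all norms are equivalent, so the linear map $\uv\mapsto\ef{\uv}$ satisfies $\esssup_{\x}\delta(\x)\le C_W\nnn{\uv^n-\uv^\star}_\Omega$ for some $C_W>0$; hence $\omega(2\delta(\x))\le\omega(2C_W\nnn{\uv^n-\uv^\star}_\Omega)$ pointwise, and
\begin{align*}
\frac{Q_A(n)}{2}\int_\Omega\omega(2\delta(\x))\,\delta(\x)^2\dx\le\frac{M_\mu}{2m_\mu}\,\omega\!\left(2C_W\nnn{\uv^n-\uv^\star}_\Omega\right)\nnn{\uv^n-\uv^\star}_\Omega^2,
\end{align*}
which is $o_W(\nnn{\uv^\star-\uv^n}_\Omega^2)$ because $\omega(\varepsilon)\to0$ as $\varepsilon\to0$. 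Feeding this refined bound, together with~\eqref{eq:thmfirstsummand}, into~\eqref{eq:thmsummands} and optimising over $\gamma$ exactly as in Theorem~\ref{thm:globalcontraction} (the minimiser being $\gamma=\nicefrac{1}{2Q_A(n)}$) produces the factor $q_A(n)=1-\nicefrac{1}{4Q_A(n)}$ together with the advertised $o_W$ remainder. The main obstacle is precisely this uniform-continuity/norm-equivalence step: it is genuinely confined to the finite-dimensional setting, since in infinite dimensions $\nnn{\uv^n-\uv^\star}_\Omega\to0$ does not control $\esssup_\x\delta(\x)$, and the replacement of $\xi'(\max\{0,\dots\})$ by $\xi'(|\ef{\uv^n}|)$ could then fail to be of higher order.
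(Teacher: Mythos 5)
Your route is genuinely different from the paper's. The paper linearises the superposition operator $\uv\mapsto\A[\uv](\uv)$ on $W$: it computes its G\^ateaux derivative, invokes the fact that a Lipschitz, G\^ateaux differentiable map on a finite-dimensional space is Fr\'echet differentiable, expands around $\uv^n$ to produce the $o_W(\nnn{\uv^\star-\uv^n}_\Omega^2)$ remainder, and then applies the Cauchy--Schwarz inequality together with $\mu'\le 0$ to the resulting quadratic form; this is where the denominator $2\mu'(|\ef{\uv^n}|^2)|\ef{\uv^n}|^2+\mu(|\ef{\uv^n}|^2)=\xi'(|\ef{\uv^n}|)$ emerges. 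You instead perturb the scalar lower bound \eqref{eq:difinf} directly, replacing the $\uv^\star$-dependent infimum of $\xi'$ by $\xi'(|\ef{\uv^n}|)$ at the cost of a modulus-of-continuity term. Your identification of the denominator of $q_A(n)$ as $\xi'(|\ef{\uv^n}|)$ is exactly right, and the bookkeeping through \eqref{eq:thmsummands}, \eqref{eq:thmfirstsummand} and the optimisation over $\gamma$ is correct.

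There are, however, two gaps in how you cash in the finite-dimensionality. First, the inverse estimate $\esssup_{\x}\delta(\x)\le C_W\nnn{\uv^n-\uv^\star}_\Omega$ does not follow from ``all norms on $W$ are equivalent'': the map $\vv\mapsto\esssup_\x|\ef{\vv}|$ is a norm on $W$ only if it is finite there, i.e., only if $W\subset \mathrm{W}^{1,\infty}(\Omega)^d$, and a general finite-dimensional subspace of $V\subset\HS^d$ need not satisfy this (a one-dimensional $W$ spanned by a divergence-free field with unbounded gradient already defeats it). This is harmless for standard finite element spaces, but it is an extra hypothesis relative to the theorem as stated; it matters because without pointwise control of $\delta$ the quantity $\int_\Omega\omega(2\delta)\,\delta^2\dx$ need not be $o(\int_\Omega\delta^2\dx)$ when $\delta$ concentrates. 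The step can be repaired without any $L^\infty$ assumption: normalise $\vv_n=(\uv^n-\uv^\star)/\nnn{\uv^n-\uv^\star}_\Omega$, use compactness of the unit sphere of $W$ to extract an a.e.\ convergent, $L^2$-dominated subsequence of $|\ef{\vv_n}|$, and conclude $\int_\Omega\omega(2\delta)\,|\ef{\vv_n}|^2\dx\to0$ by dominated convergence, the integrand being bounded by $(M_\mu-m_\mu)|\ef{\vv_n}|^2$. Second, your argument needs a modulus of continuity for $\xi'$, but (A1) only asserts that $\mu$ is differentiable in its second argument, not that $\mu'$ is continuous, so $\omega$ need not exist under the stated hypotheses; the paper's Fr\'echet-derivative route needs only G\^ateaux differentiability of the superposition operator and thus sidesteps this (though it faces an analogous caveat for the relaxed power-law model, where $\mu_\varepsilon'$ jumps). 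Neither issue arises for the Carreau model on a finite element space, but both should be addressed if the proof is to cover the theorem at its stated level of generality.
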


\begin{proof}
This result follows, in principle, from the proof of Theorem~\ref{thm:globalcontraction} with a modification of the bound from~\eqref{eq:summandtwo}.
Consider the map $\omega:W \to W^\star$ given by $\omega(\uv):=\A[\uv](\uv)$ for $\uv \in W$. A lengthy, but straightforward calculation reveals that the G\^{a}teaux derivative of $\omega$ exists and is given by
\begin{align*}
\omega'(\uv)(\vv)(\wv)=\int_\Omega 2 \mu'(|\eu|^2)(\eu:\ev)(\eu:\e{\wv})+\mu(|\eu|^2)(\ev:\e{\wv}) \dx, \quad \vv,\wv \in W.
\end{align*}
Since $W$ is a finite-dimensional space and $\omega:W \to W^\star$ is Lipschitz continuous by Proposition~\ref{lem:properties}, the G\^{a}teaux  derivative coincides with the Fr\'{e}chet derivative, see~\cite[Prop.~3.5]{Shapiro:1990}. By definition of the Fr\'{e}chet derivative, one has that
\begin{align*}
\omega(\uv)=\omega(\uv^n)+\omega'(\uv^n)(\uv-\uv^n)+o_W(\nnn{\uv-\uv^n}_\Omega);
\end{align*}
here, $o_W(\nnn{\uv-\uv^n}_\Omega)$ denotes a remainder in the dual space $W^\star$. Combining these two observations yields
\begin{multline*}
\left(\A[\uv^\star+t(\uv^n-\uv^\star)](\uv^\star+t(\uv^n-\uv^\star))-\A[\uv^\star](\uv^\star)\right)(\uv^n-\uv^\star) \\
\begin{aligned}
&=\left(\omega(\uv^\star+t(\uv^n-\uv^\star))-\omega(\uv^\star)\right)(\uv^n-\uv^\star)\\
&=t\omega'(\uv^n)(\uv^n-\uv^\star)(\uv^n-\uv^\star)+o_W(\nnn{\uv^\star-\uv^n}_\Omega^2) \\
&= t\int_\Omega 2 \mu'(|\ef{\uv^n}|^2)(\ef{\uv^n}:\ef{\uv^n-\uv^\star})^2 \\
& \quad +\mu(|\ef{\uv^n}|^2)|\ef{\uv^n-\uv^\star}|^2 \dx+o_W(\nnn{\uv^\star-\uv^n}_\Omega^2).
\end{aligned}
\end{multline*}
Recall that, by assumption (A3), $\mu'(t) \leq 0$ for all $t \geq 0$. Therefore, the Cauchy--Schwarz inequality implies that 
\begin{multline*}
\left(\A[\uv^\star+t(\uv^n-\uv^\star)](\uv^\star+t(\uv^n-\uv^\star))-\A[\uv^\star](\uv^\star)\right)(\uv^n-\uv^\star) \\ 
\begin{aligned}
&\geq t \int_\Omega \left\{2 \mu'(|\ef{\uv^n}|^2) |\ef{\uv^n}|^2+\mu(|\ef{\uv^n}|^2)\right\}|\underline{e}(\uv^n-\uv^\star)|^2 \dx\\
& \quad +o_W(\nnn{\uv^\star-\uv^n}_\Omega^2).
\end{aligned}
\end{multline*}
Consequently,
\begin{align*}
\E(\uv^n)-\E(\uv^\star) &= \int_0^1 \left(\A[\uv^\star+t(\uv^n-\uv^\star)](\uv^\star+t(\uv^n-\uv^\star))-\A[\uv^\star](\uv^\star)\right)(\uv^n-\uv^\star) \, \mathrm{d} t \\
& \geq \frac{1}{2} \int_\Omega \left\{2 \mu'(|\ef{\uv^n}|^2) |\ef{\uv^n}|^2+\mu(|\ef{\uv^n}|^2)\right\}|\underline{e}(\uv^n-\uv^\star)|^2 \dx +o_W(\nnn{\uv^\star-\uv^n}_\Omega^2),
\end{align*}
and thus
\begin{align*}
\frac{1}{2} \int_\Omega \mu(|\ef{\uv^n}|^2)|\ef{\uv^n-\uv^\star}|^2 \dx &\leq \esssup_{\x \in \Omega} \frac{\mu(|\ef{\uv^n}|^2)}{2 \mu'(|\ef{\uv^n}|^2) |\ef{\uv^n}|^2+\mu(|\ef{\uv^n}|^2)}\Big ((\E(\uv^n)-\E(\uv^\star))\\
&\quad +o_W(\nnn{\uv^\star-\uv^n}_\Omega^2)\Big).
\end{align*} 
The rest follows as in the proof of Theorem~\ref{thm:globalcontraction}. We note, however, that the factor of the remainder term $o_W(\nnn{\uv^\star-\uv^n}_\Omega^2)$ above cancels by the multiplication with $\gamma$ in~\eqref{eq:thmsummands}. 
\end{proof} 

\begin{remark}
We emphasize that the contraction factor $q_A$ from~\eqref{eq:contractionfactorcomp} is independent of the finite-dimensional subspace $W \subset V$. However, the remainder term $o_W$ may depend on the choice of the given discrete subspace, as indicated by the subscript. 
\end{remark}

Finally we remark that the energy error is equivalent to the norm error, i.e., the norm error contracts, up to some uniform constant, along the sequence generated by the Ka\v{c}anov scheme as well. This equivalence was already established in a similar setting, e.g., in~\cite[Lem.~2.3]{HeidWihler2:19v1} and \cite[Lem.~5.1]{GantnerHaberlPraetoriusStiftner:17}. The proof can also be found in those references.

\begin{proposition} \label{lem:energydifference}
Let $\E$ be defined as in \eqref{eq:Edeff}, with $\mu$ satisfying {\rm (A1)--(A3)}, and let $\uv^\star$ be the unique minimiser of
$\E$ in $V$; then,
\begin{equation} \label{eq:energydifference}
  \frac{m_\mu}{2}\nnn{\uv^\star - \vv}_\Omega^2 \leq \E(\vv)-\E(\uv^\star) \leq \frac{\sqrt{3}M_\mu}{2} \nnn{\uv^\star-\vv}^2_\Omega \qquad \text{for all } \vv \in V.
 \end{equation}
An analogous result holds on any finite-dimensional subspace $W \subset V$, with $V$ replaced by $W$ in the assertion above.
\end{proposition}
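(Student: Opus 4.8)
The plan is to join $\vv$ to the minimiser $\uv^\star$ by a straight segment and integrate the derivative of the energy along it. Concretely, set $\wv_t := \uv^\star + t(\vv - \uv^\star)$ for $t \in [0,1]$ and $\psi(t) := \E(\wv_t)$; the fundamental theorem of calculus gives $\E(\vv) - \E(\uv^\star) = \int_0^1 \psi'(t)\,\mathrm{d}t$ with $\psi'(t) = \E'(\wv_t)(\vv - \uv^\star)$. The crucial observation is that $\uv^\star$ is the minimiser of $\E$, so $\E'(\uv^\star) = 0$ and hence $\ell(\vv - \uv^\star) = \A[\uv^\star](\uv^\star)(\vv - \uv^\star)$. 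Subtracting this vanishing quantity inside the integral and using $\E'(\uv) = \A[\uv](\uv) - \ell$ from \eqref{eq:Efrechet2}, I would rewrite the integrand purely as the difference of the monotone operator $\uv \mapsto \A[\uv](\uv)$ evaluated at $\wv_t$ and at $\uv^\star$, tested against $\vv - \uv^\star$.

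The key step is then to recognise that $\wv_t - \uv^\star = t(\vv - \uv^\star)$, so that this rewritten integrand is exactly of the form handled by Proposition~\ref{lem:properties}(b). For the lower bound I would apply the strong monotonicity estimate \eqref{eq:smonotone} to $\A[\wv_t](\wv_t)(\wv_t - \uv^\star) - \A[\uv^\star](\uv^\star)(\wv_t - \uv^\star)$, whose left-hand side equals $t\,\psi'(t)$ and whose right-hand side equals $m_\mu t^2 \nnn{\vv - \uv^\star}_\Omega^2$; dividing out the common factor $t$ yields $\psi'(t) \geq m_\mu\, t\, \nnn{\vv - \uv^\star}_\Omega^2$. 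Symmetrically, for the upper bound the Lipschitz estimate \eqref{eq:lipschitz} applied to the same difference, together with $\nnn{\wv_t - \uv^\star}_\Omega = t\,\nnn{\vv - \uv^\star}_\Omega$, gives $\psi'(t) \leq \sqrt{3}M_\mu\, t\, \nnn{\vv - \uv^\star}_\Omega^2$.

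Integrating these two pointwise bounds over $t \in [0,1]$ and using $\int_0^1 t\,\mathrm{d}t = \tfrac12$ produces exactly the factors $\tfrac{m_\mu}{2}$ and $\tfrac{\sqrt{3}M_\mu}{2}$ in \eqref{eq:energydifference}. There is no genuine obstacle here: the entire argument rests on the two estimates already established in Proposition~\ref{lem:properties}(b), and the only point requiring a little care is tracking the extra factor of $t$ that arises because $\wv_t - \uv^\star$ scales linearly in $t$ (this is precisely what generates the $\tfrac12$). The claim on a finite-dimensional subspace $W$ follows verbatim, since the Lipschitz and strong monotonicity bounds of Proposition~\ref{lem:properties} hold with $V$ replaced by any subspace, and the minimiser $\uv^\star$ of $\E$ over $W$ again satisfies the Euler--Lagrange identity $\A[\uv^\star](\uv^\star)(\vv) = \ell(\vv)$ for all $\vv \in W$, which is all that the above argument uses.
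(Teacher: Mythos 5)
Your argument is correct and is precisely the standard proof of this equivalence, i.e., the one given in the references the paper cites in lieu of its own proof (integrate $\psi'(t)=\E'(\uv^\star+t(\vv-\uv^\star))(\vv-\uv^\star)$ over $[0,1]$, subtract the vanishing term $\E'(\uv^\star)(\vv-\uv^\star)$, and apply the strong monotonicity and Lipschitz bounds of Proposition~\ref{lem:properties}(b) to the scaled increment $t(\vv-\uv^\star)$). The bookkeeping of the factor $t$, yielding the constants $\nicefrac{m_\mu}{2}$ and $\nicefrac{\sqrt{3}M_\mu}{2}$ after integration, and the verbatim transfer to a finite-dimensional subspace $W$ via the Galerkin Euler--Lagrange identity are both handled correctly.
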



\subsection{Application to the Carreau model} \label{sec:carreau}

A widely used model for the flow of incompressible non-Newtonian fluids is the Carreau law, cf.~\cite{Carreau:72}. In that case the viscosity coefficient $\mu$ in~\eqref{eq:strong} is of the form
\begin{align} \label{eq:carreau}
\mu(t)=\mu_\infty + (\mu_0-\mu_\infty)(1+ \lambda t)^{\frac{r-2}{2}},
\end{align}
where, for shear-thinning fluids, $r \in (1,2)$, $\lambda > 0$ is the relaxation time, and $0<\mu_\infty<\mu_0<\infty$ denote the infinite and zero shear rate, respectively. The function $\mu$ from~\eqref{eq:carreau} is smooth, decreasing since $r \in (1,2)$, and satisfies the structural assumption (A2), cf.~\eqref{eq:a2}, thanks to the following lemma. 

\begin{lemma} 
Let $r \in (1,2)$, $\lambda >0$, and $0<\mu_\infty < \mu_0 < \infty$. Then, the following inequalities hold:
\begin{align*} 
\mu_\infty (t-s) \leq \mu(t^2)t-\mu(s^2)s \leq \mu_0 (t-s), \qquad t \geq s \geq 0.
\end{align*}
\end{lemma}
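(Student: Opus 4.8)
The plan is to recognise that the asserted double inequality is precisely the structural assumption (A2) specialised to the Carreau law, with the explicit constants $m_\mu = \mu_\infty$ and $M_\mu = \mu_0$. Writing $\xi(t) := \mu(t^2)t$ as in the preceding lemma, I would first reduce the statement to a pointwise bound on $\xi'$. Since
\[
\xi(t)-\xi(s) = \int_s^t \xi'(\sigma)\,\mathrm{d}\sigma \qquad \text{for } t \geq s \geq 0,
\]
it suffices to prove that $\mu_\infty \leq \xi'(\sigma) \leq \mu_0$ for all $\sigma \geq 0$; integrating this chain over $[s,t]$ then yields the claim immediately, the degenerate case $t=s$ being trivial.

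Next I would compute $\xi'$ explicitly. By the chain rule, $\xi'(t) = \mu(t^2) + 2t^2\mu'(t^2)$, and inserting the Carreau form together with $\mu'(\sigma) = (\mu_0-\mu_\infty)\tfrac{r-2}{2}\lambda(1+\lambda\sigma)^{(r-4)/2}$, then setting $u := \lambda t^2 \geq 0$ and factoring out $(\mu_0-\mu_\infty)(1+u)^{(r-4)/2}$, the bracket $(1+u) + (r-2)u$ collapses to $1 + (r-1)u$. This leaves the compact representation
\[
\xi'(t) = \mu_\infty + (\mu_0-\mu_\infty)\,g(u), \qquad g(u) := (1+u)^{\frac{r-4}{2}}\bigl(1 + (r-1)u\bigr).
\]
Because $\mu_0 - \mu_\infty > 0$, the entire statement reduces to establishing $0 \leq g(u) \leq 1$ for all $u \geq 0$.

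The lower bound $g(u) \geq 0$ is immediate: for $r > 1$ both factors $(1+u)^{(r-4)/2}$ and $1 + (r-1)u$ are strictly positive on $u \geq 0$. The upper bound $g(u) \leq 1$, which I expect to be the only genuine obstacle, is equivalent to $1 + (r-1)u \leq (1+u)^{(4-r)/2}$. I would prove this by a monotonicity argument: setting $h(u) := (1+u)^{(4-r)/2} - \bigl(1 + (r-1)u\bigr)$, one has $h(0) = 0$ and $h'(u) = \tfrac{4-r}{2}(1+u)^{(2-r)/2} - (r-1)$. Since $r \in (1,2)$ makes the exponent $(2-r)/2$ positive, the factor $(1+u)^{(2-r)/2} \geq 1$ on $u \geq 0$, whence $h'(u) \geq \tfrac{4-r}{2} - (r-1) = \tfrac{3(2-r)}{2} > 0$. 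Thus $h$ is increasing from $h(0)=0$, so $h \geq 0$, which gives $g(u) \leq 1$ and closes both bounds. I would remark in passing that $g(0)=1$ while $g(u) \to 0$ as $u \to \infty$, so the constants $\mu_0$ and $\mu_\infty$ are in fact sharp, attained at zero strain and recovered asymptotically at infinite strain, respectively.
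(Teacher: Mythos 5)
Your proof is correct, and every step checks out: the factorisation $\xi'(t)=\mu_\infty+(\mu_0-\mu_\infty)(1+u)^{\frac{r-4}{2}}\bigl(1+(r-1)u\bigr)$ with $u=\lambda t^2$ is right, and the monotonicity argument for $h$ closes the upper bound cleanly. The reduction to the pointwise bound $\mu_\infty\leq\xi'(\sigma)\leq\mu_0$ is exactly the paper's first move (the paper invokes the mean value theorem where you integrate $\xi'$; this is immaterial). Where you diverge is in how that pointwise bound is established. The paper argues qualitatively: it asserts that $\xi''(\tau)\neq 0$, so that $\xi'$ has no local extrema on $(0,\infty)$ and is therefore monotone, and then identifies $\sup_{\tau\geq0}\xi'(\tau)$ and $\inf_{\tau\geq0}\xi'(\tau)$ with the limits $\lim_{\tau\to0}\xi'(\tau)=\mu_0$ and $\lim_{\tau\to\infty}\xi'(\tau)=\mu_\infty$. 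You instead verify the two inequalities $0\leq g\leq1$ directly, which buys you a fully explicit, self-contained argument: the paper's ``straightforward calculation'' that $\xi''\neq0$ is left to the reader, and your route avoids the second derivative entirely, needing only the elementary estimate $(1+u)^{\frac{2-r}{2}}\geq1$. The paper's route, in exchange, gives slightly more information (strict monotonicity of $\xi'$), though your closing observation that $g(0)=1$ and $g(u)\to0$ as $u\to\infty$ recovers the sharpness of both constants anyway.
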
 

\begin{proof}
Define $\xi(t):=\mu(t^2)t$, $t \geq 0$. The mean value theorem yields
\begin{align*}
\inf_{\tau \geq 0} \xi'(\tau) (t-s) \leq \xi(t)-\xi(s) \leq \sup_{\tau \geq 0} \xi'(\tau)(t-s),
\end{align*}
and thus we need to show that $\mu_\infty=\inf_{\tau \geq 0} \xi'(\tau)$ and $\mu_0=\sup_{\tau \geq 0} \xi'(\tau)$. A straightforward calculation reveals that $\xi''(\tau) \neq 0$ for all $\tau \geq 0$, i.e., $\xi'$ has no local extrema in the interval $(0,\infty)$. Since, in addition, $\lim_{\tau \to 0} \xi'(\tau)=\mu_0$ and $\lim_{\tau \to \infty} \xi'(\tau)=\mu_\infty$, the lemma is established.
\end{proof}

In particular, we may apply Theorem~\ref{thm:globalcontraction} and Theorem~\ref{thm:globalcontractioncomp} to the Carreau model. In this case, the computable contraction factor from~\eqref{eq:contractionfactorcomp} reads as follows, with $\uv^n \in W$:
\begin{align*} 
q_A(n)&:=1-\frac{1}{4}\left(\esssup_{\x \in \Omega}\frac{\mu_\infty+(\mu_0-\mu_\infty)(1+\lambda |\ef{\uv^n}|^2)^{\frac{r-2}{2}}}{\mu_\infty+(\mu_0-\mu_\infty)(1+\lambda |\ef{\uv^n}|^2)^{-1+\frac{r-2}{2}}(1+\lambda(r-1)|\ef{\uv^n}|^2)}\right)^{-1} \\
&=1-\frac{1}{4}\essinf_{\x \in \Omega}\frac{\mu_\infty+(\mu_0-\mu_\infty)(1+\lambda |\ef{\uv^n}|^2)^{-1+\frac{r-2}{2}}(1+\lambda(r-1)|\ef{\uv^n}|^2)}{\mu_\infty+(\mu_0-\mu_\infty)(1+\lambda |\ef{\uv^n}|^2)^{\frac{r-2}{2}}}. 
\end{align*}
Let us further examine this factor. First we note that
\[\frac{\mu_\infty+(\mu_0-\mu_\infty)(1+\lambda t^2)^{-1+\frac{r-2}{2}}(1+\lambda(r-1)t^2)}{\mu_\infty+(\mu_0-\mu_\infty)(1+\lambda t^2)^{\frac{r-2}{2}}} \to 1 \qquad \text{as } t \to \infty,\]  
which is optimal from the point of view if contraction. Consequently, we do not expect a significant deterioration of the convergence rate if the the rate-of-strain tensor of the solution, i.e., $\ef{\uv^\star}$, is unbounded, cf.~Experiment~\ref{exp:errordecaycarreau}.

Moreover, an elementary calculation reveals that 
\[ \frac{\mu_\infty+(\mu_0-\mu_\infty)(1+\lambda t^2)^{-1+\frac{r-2}{2}}(1+\lambda(r-1)t^2)}{\mu_\infty+(\mu_0-\mu_\infty)(1+\lambda t^2)^{\frac{r-2}{2}}} \geq \frac{1+\lambda(r-1)t^2}{1+\lambda t^2} \qquad \text{for all } t \geq 0,\] 
and therefore
\begin{align*}
q_A(n) &=1-\frac{1}{4}\essinf_{\x \in \Omega}\frac{\mu_\infty+(\mu_0-\mu_\infty)(1+\lambda |\ef{\uv^n}|^2)^{-1+\frac{r-2}{2}}(1+\lambda(r-1)|\ef{\uv^n}|^2)}{\mu_\infty+(\mu_0-\mu_\infty)(1+\lambda |\ef{\uv^n}|^2)^{\frac{r-2}{2}}}\\
& \leq 1-\frac{1}{4} \essinf_{\x \in \Omega} \frac{1+\lambda (r-1) |\ef{\uv^n}|^2}{1+ \lambda|\ef{\uv^n}|^2}\\& \leq 1-\frac{1}{4}(r-1). 
\end{align*}
In combination with Remark~\ref{rem:boundconfac}, we get
\begin{align} \label{eq:qaC}
q_A(n) \leq \min\left\{1-\frac{1}{4} \frac{\mu_\infty}{\mu_0},1-\frac{1}{4}(r-1)\right\}\!,
\end{align}
i.e., the convergence rate may only deteriorate drastically if $r\to 1$ and, in addition, $\nicefrac{\mu_\infty}{\mu_0} \to 0$. 


\subsection{Application to the relaxed power-law model} \label{sec:powerlaw}

Another prominent model for non-New\-tonian fluids, e.g., in polymer processing, is the power-law model, see, e.g.,~\cite[Ch.~3.3]{tadmor:2006}. For this model, the weak formulation~\eqref{eq:directweak} of the boundary-value problem under consideration is as follows:
\begin{align} \label{eq:powerlawmodel}
\text{find } \uv \in X \ \text{such that} \qquad \int_\Omega |\eu|^{r-2} \eu:\ev \dx =\ell(\vv) \qquad \text{for all } \vv \in X;
\end{align}
here, $X:=\{\uv \in \mathrm{W}_0^{1,r}(\Omega)^d: \nabla \cdot \uv=0\}$ and $\ell \in X^\star$, where, for shear-thinning fluids, $r \in (1,2)$. In particular, the viscosity coefficient is given by
\begin{align*}
\mu(t)=t^{\frac{r-2}{2}}.
\end{align*}
Clearly, $\mu:\mathbb{R}_{> 0} \to \mathbb{R}_{> 0}$ is neither bounded away from zero nor bounded from above, i.e., (A2) is not satisfied. Therefore, as was proposed in the work~\cite{Diening:2020}, we consider a relaxed version of $\mu$: for $0<\varepsilon_{-}<\varepsilon_{+}<\infty$ we define the viscosity coefficient
\begin{align} \label{eq:relaxedpowerlaw}
\mu_{\varepsilon}(t):=
\begin{cases}
\varepsilon_{-}^{r-2} &\quad \, \text{for}\ 0 \leq t < \varepsilon_{-}^2, \\
t^{\frac{r-2}{2}} &\quad \,  \text{for}\ \varepsilon_{-}^2 \leq t \leq \varepsilon_{+}^2, \\
\varepsilon_{+}^{r-2} &\quad \, \text{for}\ t \geq \varepsilon_{+}^2.
\end{cases}
\end{align} 
The function $\mu_\varepsilon$ is decreasing, strictly positive, bounded, globally Lipschitz continuous, and satisfies (A2) with
\[(r-1)\varepsilon_{+}^{r-2}(t-s) \leq \mu(t^2)t-\mu(s^2)s \leq \varepsilon_{-}^{r-2}(t-s), \qquad t \geq s \geq 0;\] 
it is, furthermore, differentiable at all $t \in [0,\infty)\setminus\{\varepsilon_{-}^2,\varepsilon_{+}^2\}$ and has finite left- and at right-derivatives at $t=\varepsilon_{-}^2$ and $t=\varepsilon_{+}^2$. Hence, even though $\mu_{\varepsilon}$ is not continuously differentiable on $[0,\infty)$, Theorem~\ref{thm:globalcontraction} can be, nevertheless, applied in the given setting. Moreover, in the generic case when the set $\Omega_S^n:=\{\x \in \Omega: |\ef{\uv^n(\x)}| \in \{\varepsilon_{-},\varepsilon_{+}\}\}$, for every $n \geq 0$, has Lebesgue measure zero, the operator $\omega$ from the proof of Theorem~\ref{thm:globalcontractioncomp} is Fr\'{e}chet differentiable at $\uv^n \in W$. Thus, in turn, Theorem~\ref{thm:globalcontractioncomp} can then also be employed to the relaxed power-law model\footnote{Nonetheless we will present a continuously differentiable version of the viscosity coefficient~\eqref{eq:relaxedpowerlaw} in the Appendix~\ref{app:differentiablerelaxatio}.}. A simple calculation reveals that the computable contraction factor from~\eqref{eq:contractionfactorcomp} can again be bounded; indeed,
\begin{align} \label{eq:facpl}
q_A(n) \leq 1-\frac{1}{4}(r-1).
\end{align}
Moreover, one even has that $q_A(n)=1-4^{-1}(r-1)$ if the set $\{\x \in \Omega: \varepsilon_{-} \leq |\ef{\uv^n(\x)}| \leq \varepsilon_{+}\}$ is of positive Lebesgue measure. We further remark that 
\[
\frac{m_\mu}{M_\mu}=\frac{(r-1) \varepsilon_{+}^{r-2}}{\varepsilon_{-}^{r-2}} < (r-1),
\]
since $r \in (1,2)$. This shows that the bound~\eqref{eq:facpl} is, for every value $r \in (1,2)$, sharper than the bound from Remark~\ref{rem:boundconfac}. Furthermore, this bound predicts that it is the physical parameter $r$ that affects the convergence rate of the iteration, in the finite-dimensional setting at least, rather than the quotient $\nicefrac{\varepsilon_{+}^{r-2}}{\varepsilon_{-}^{r-2}}$ implied by existing bounds on the contraction factor, cf.~\cite[Cor.~19]{Diening:2020}. Significantly, the upper bound $(r-1)$ on the contraction factor appearing of the right-hand side of~\eqref{eq:facpl} is independent of the relaxation parameters $\varepsilon_{\pm}$. This is of importance as we are interested in the power-law model~\eqref{eq:powerlawmodel} and we thus need to let $\varepsilon_{-} \to 0$ and $\varepsilon_{+} \to \infty$. We note that the existence of a bound independent of $\varepsilon_{\pm}$ on the contraction factor of the relaxed Ka\v{c}anov iteration applied to the power-law model with $r \in (1,2)$ was stated in the infinite-dimensional case as an open problem in~\cite[Ex.~20]{Diening:2020}.

We further note that the energy functional $\E_{\varepsilon}$ corresponding to the viscosity from~\eqref{eq:relaxedpowerlaw} coincides with the energy functional $\mathcal{J}_\epsilon$ from~\cite{Diening:2020} up to a constant shift depending on $\varepsilon_{-}$. To be precise, one has that
\begin{align*}
\E_{\varepsilon}(\uv)=\mathcal{J}_\epsilon(\uv)+\left(\frac{1}{2}-\frac{1}{r}\right)\varepsilon_{-}^r, \qquad \uv \in V,
\end{align*}
and thus the results established in~\cite{Diening:2020} may be directly applied in our setting. In particular, this implies that the unique minimiser $\uv^\star_\varepsilon \in V$ of $\E_{\varepsilon}$ converges in $\mathrm{W}^{1,r}_{0}(\Omega)$ to the unique minimiser $\uv^\star \in X$ of 
\[\E(\uv)=\frac{1}{r}\int_\Omega |\eu|^r\dx-\ell(\uv),\]
cf.~\cite[Cor.~10]{Diening:2020}. 

\begin{remark}
The relaxed power-law model could also be solved by using a (damped) Newton method, cf.~\cite[Prop.~5.3]{HeidWihler:19v2}. However, it is unclear whether and how the convergence rate will deteriorate as $\varepsilon_{-} \to 0$ and $\varepsilon_{+} \to \infty$. For an application of Newton's method to the power-law model with a different regularisation approach we refer to~\cite{Hirn:2013}; however, the convergence rate in relation to the choice of the regularisation parameter $\varepsilon$ is not examined in that work.
\end{remark}

\begin{remark}
We emphasise that our analysis does also apply to a variable (measurable) exponent $r:\Omega \to (1,2)$ for both the relaxed power-law model as well as the Carreau model. Then, in~\eqref{eq:facpl} and~\eqref{eq:qaC}, respectively, we need to replace $1-\nicefrac{1}{4}(r-1)$ by $1-\nicefrac{1}{4}(\essinf_{\x \in \Omega} r(\x)-1)$.
\end{remark}

\section{Experiments} \label{sec:numericalexp}

In this section, we will perform some numerical tests to assess our findings. To this end, we consider the simplified problem
\begin{align*}
\text{find} \ u \in \HS \ \text{such that} \qquad \int_\Omega \mu(|\nabla u|^2)\nabla u \cdot \nabla v= \int_\Omega fv \dx \qquad \text{for all } v \in \HS,
\end{align*}
where $\Omega:=(-1,1)^2 \setminus [0,1]\times[-1,0] \subset \mathbb{R}^2$ is an L-shaped domain, $f \in \mathrm{L}^2(\Omega)$, and the coefficient $\mu$ either obeys the Carreau law~\eqref{eq:carreau} or the relaxed power-law~\eqref{eq:relaxedpowerlaw}. We remark that the theory derived before equally applies to this simpler case. In all our experiments below, we use a conforming P1-finite element discretisation, where the mesh consists of $\mathcal{O}(10^6)$ triangles, except where explicitly stated otherwise.

\subsection{Error decay in dependence on $r$} First, we will examine how the convergence rate of the \emph{error} depends on the exponent $\frac{r-2}{2}$; recall that the norm error is equivalent to the energy error, cf.~Proposition~\ref{lem:energydifference}. This will be done for both the Carreau and the relaxed power-law model, for smooth and irregular solutions. 

\subsubsection{Error decay for the Carreau model} \label{exp:errordecaycarreau}
Let the function $\mu$ obey the Carreau law~\eqref{eq:carreau}, with $\mu_\infty=1$, $\mu_0=100$, $\lambda=2$, and varying values of $r \in (1,2)$. The source term $f$ is chosen so that the unique solution is given by
\begin{enumerate}[(a)]
\item the smooth function $u^\star(x,y)=\sin(\pi x)\sin(\pi y)$, where $(x,y) \in \mathbb{R}^2$ denote the Euclidean coordinates;
\item the function \[u^\star(R,\varphi)=R^{\nicefrac{2}{3}}\sin\left(\nicefrac{2\varphi}{3}\right)(1-R \cos(\varphi))(1+R \cos(\varphi))(1- R \sin(\varphi))(1+R \sin(\varphi))\cos(\varphi),\]
where $R$ and $\varphi$ are polar coordinates, which exhibits a singularity at the origin $(0,0)$.
\end{enumerate}
In the smooth case (a) the mesh is uniform, and in the singular case (b) the mesh is increasingly refined in the vicinity of the singularity point (0,0). In Figure~\ref{fig:CarreauError} we plot the error $\norm{\nabla u^n- \nabla u^\star}_{\mathrm{L}^2(\Omega)}$ against the number of iterative steps $n$. We can clearly see that the convergence rate deteriorates with decreasing $r$, as was predicted in \S\ref{sec:energycontraction}. We further note that the irregularity of the solution in (b) does not affect the convergence rate, as was conjectured in \S\ref{sec:carreau}.
  
\begin{figure} 
{\includegraphics[width=0.48\textwidth]{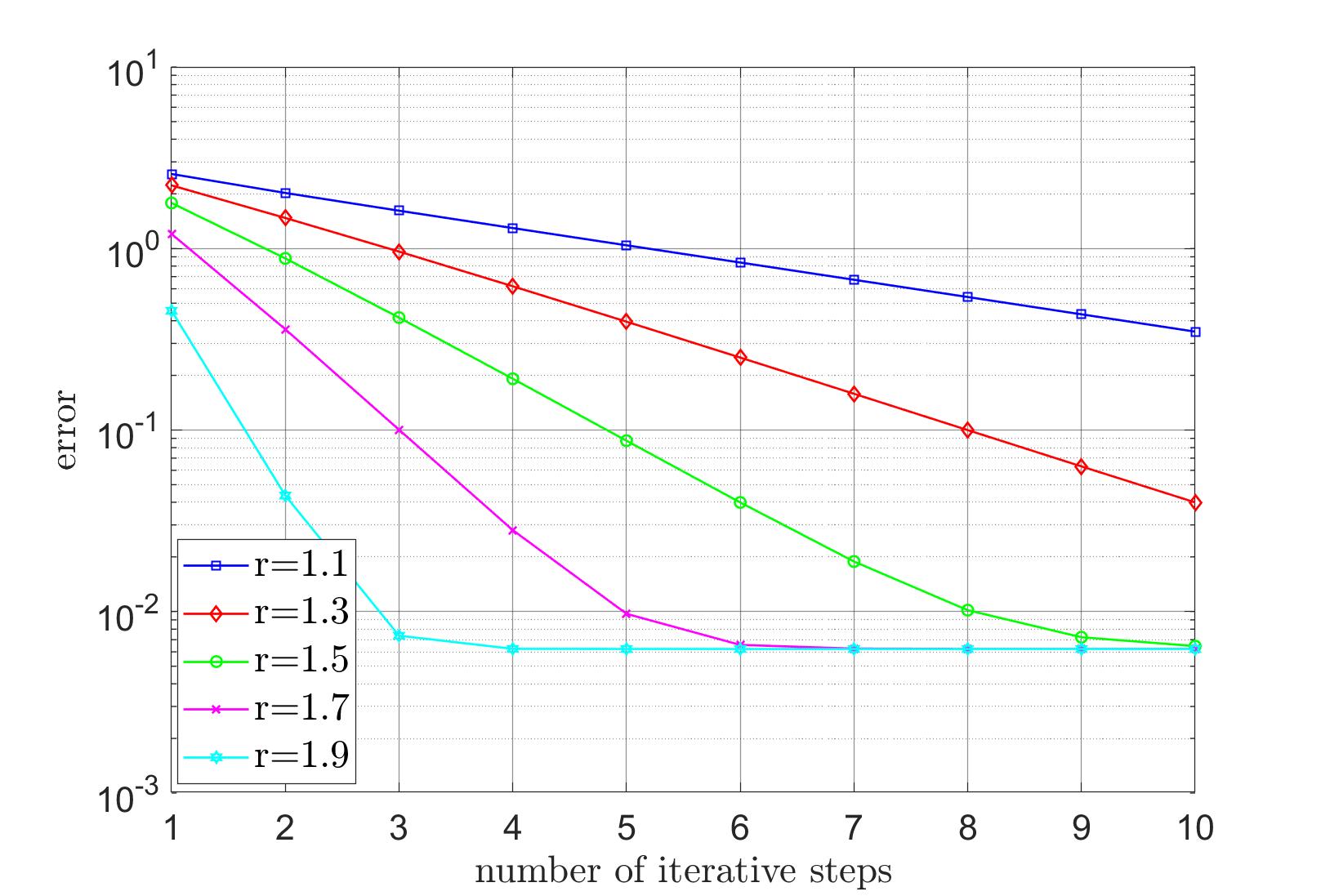}}\hfill
{\includegraphics[width=0.48\textwidth]{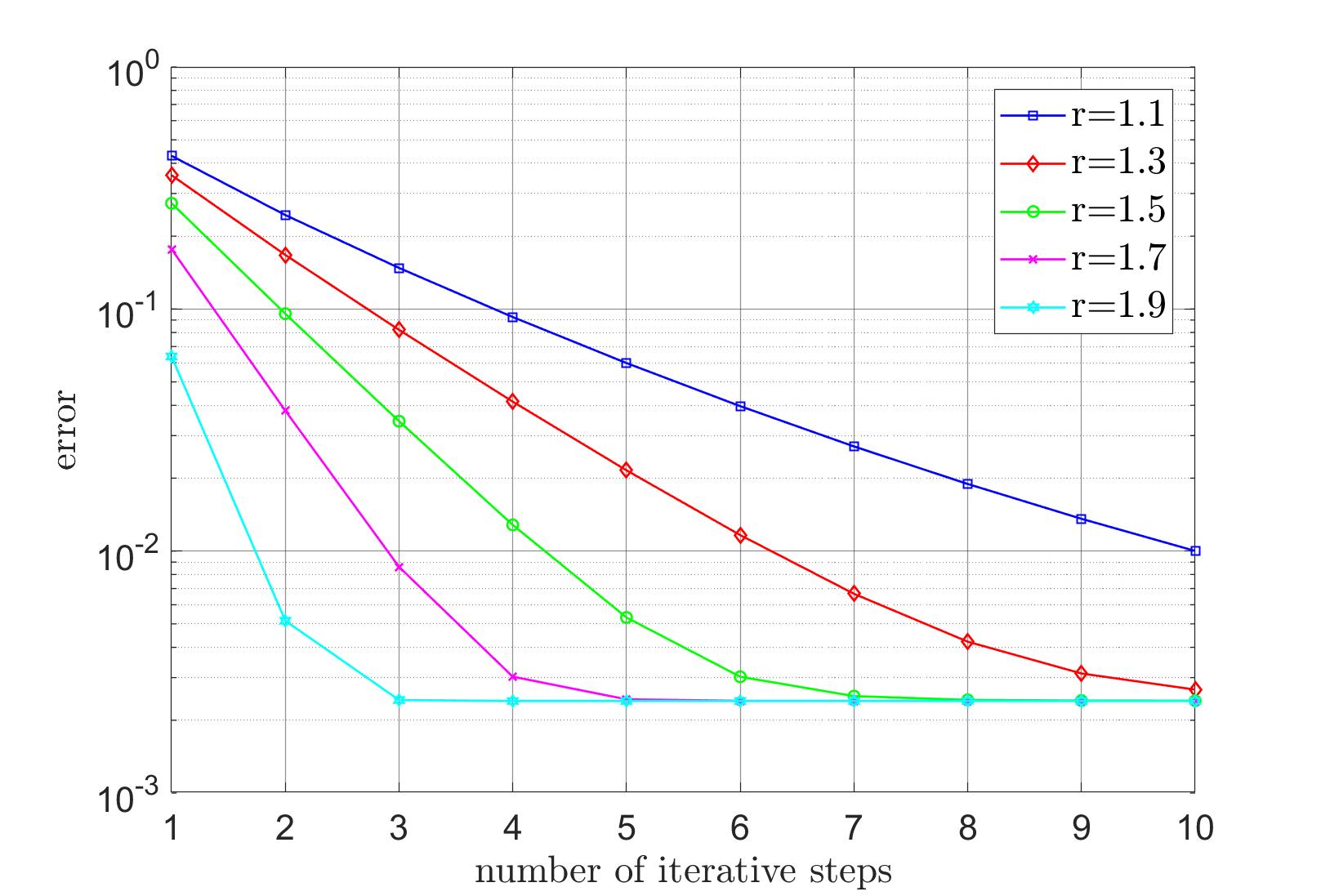}}
 \caption{Carreau model: Influence of the physical parameter $r$ on the convergence rate in the smooth case (left) and irregular case (right), where $\mu_\infty =1$, $\mu_0=100$, and $\lambda=2$.}\label{fig:CarreauError}
\end{figure}

\subsubsection{Error decay for the relaxed power-law model.} Now consider the relaxed power-law model, cf.~\eqref{eq:relaxedpowerlaw}, with $\varepsilon_{-}=10^{-6}$ and $\varepsilon_{+}=10^6$. As before, the source term $f$ is chosen so that (a) $u^\star$ is smooth, and (b) $u^\star$ exhibits a singularity at the origin (0,0). In Figure~\ref{fig:PowerLawError}, the error $\norm{\nabla u^n- \nabla u^\star}_{\mathrm{L}^2(\Omega)}$ is plotted against the number of iterative steps $n$. We observe that for the power-law model the dependence of the convergence rate on the exponent is even stronger than for the Carreau model.

\begin{figure} 
{\includegraphics[width=0.48\textwidth]{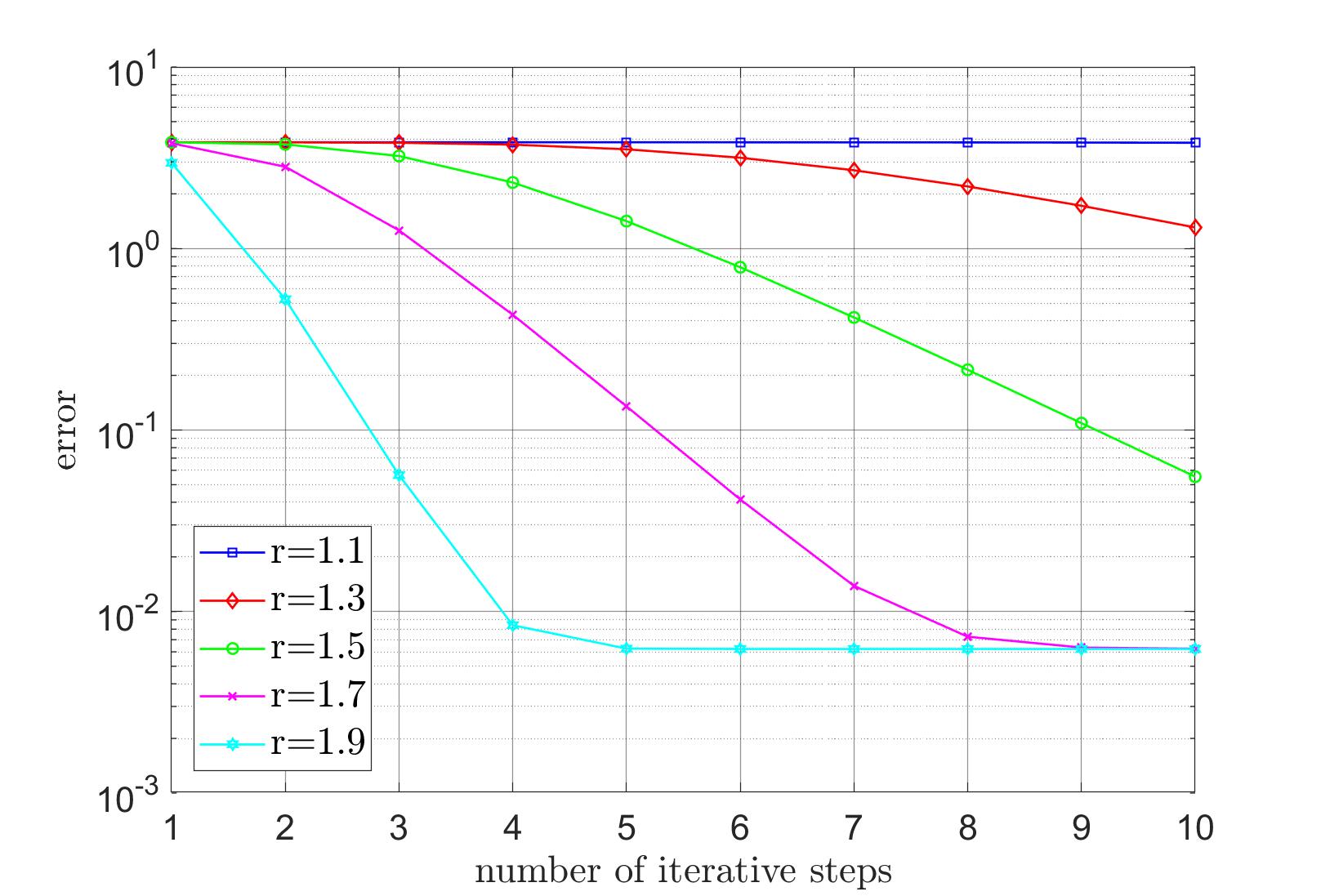}}\hfill
{\includegraphics[width=0.48\textwidth]{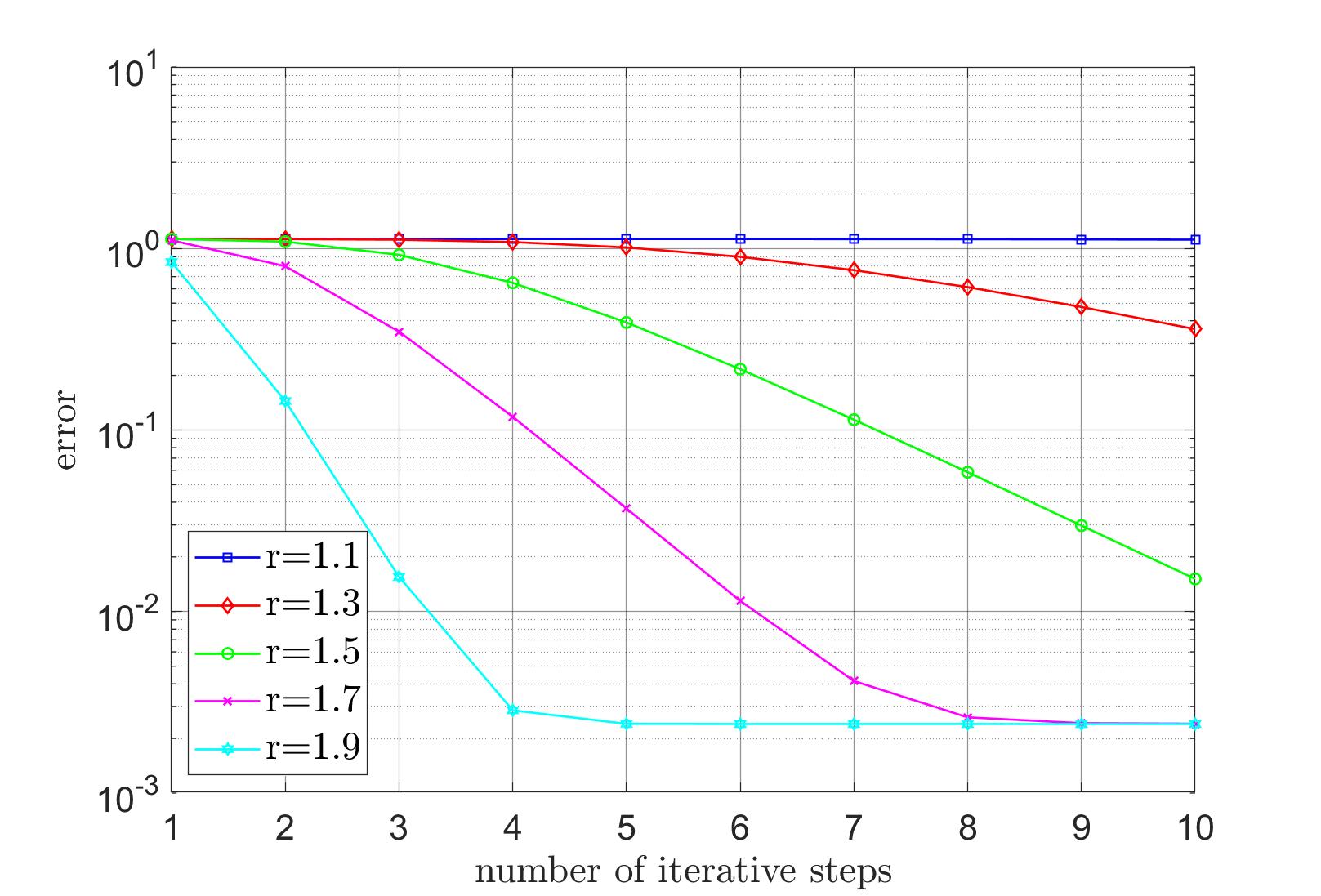}}
 \caption{Relaxed power-law model: Influence of the physical parameter $r$ on the convergence rate in the smooth case (left) and irregular case (right).}\label{fig:PowerLawError}
\end{figure}

\subsubsection{Error decay for close to constant viscosity}
In the experiments before we had that the ratio of the infinite and zero shear rates was much smaller than $(r-1)$, cf.~\eqref{eq:qaC}. Now we choose the parameters so that the `shear stress' depends almost linearly on the `shear rate', and further we let the source term $f$ be such that the unique solution of~\eqref{eq:directweak} is given by the smooth function $u^\star(x,y)=\sin(\pi x) \sin(\pi y)$. For the Carreau law we set $\mu_\infty=1$, $\mu_0=2$, $\lambda=2$, and take again varying values of $r \in (1,2)$; we emphasize that, in this test, we consider even smaller values of $r$ than in the experiments before. In view of the a posteriori computable contraction factor~\eqref{eq:qaC} we expect that the convergence rate will not deteriorate drastically for $r$ close to one, which is confirmed by our numerical experiment, cf.~Figure~\ref{fig:SQ} (left). In the case of the relaxed power-law model, we set $\varepsilon_{-}=1$, $\varepsilon_{+}=2$, and test the same values $r \in (1,2)$ as for the Carreau model. We note that these choices of the relaxation parameters $\varepsilon_{\pm}$ are in practice of no interest, as one is, rather, interested in $\varepsilon_{-} \to 0$ and $\varepsilon_{+} \to \infty$. Nonetheless, we still presume that the convergence deteriorates for $r$ close to one by our analysis in \S\ref{sec:powerlaw}. This is indeed the case, as illustrated in Figure~\ref{fig:SQ} (right).

\begin{figure} 
{\includegraphics[width=0.48\textwidth]{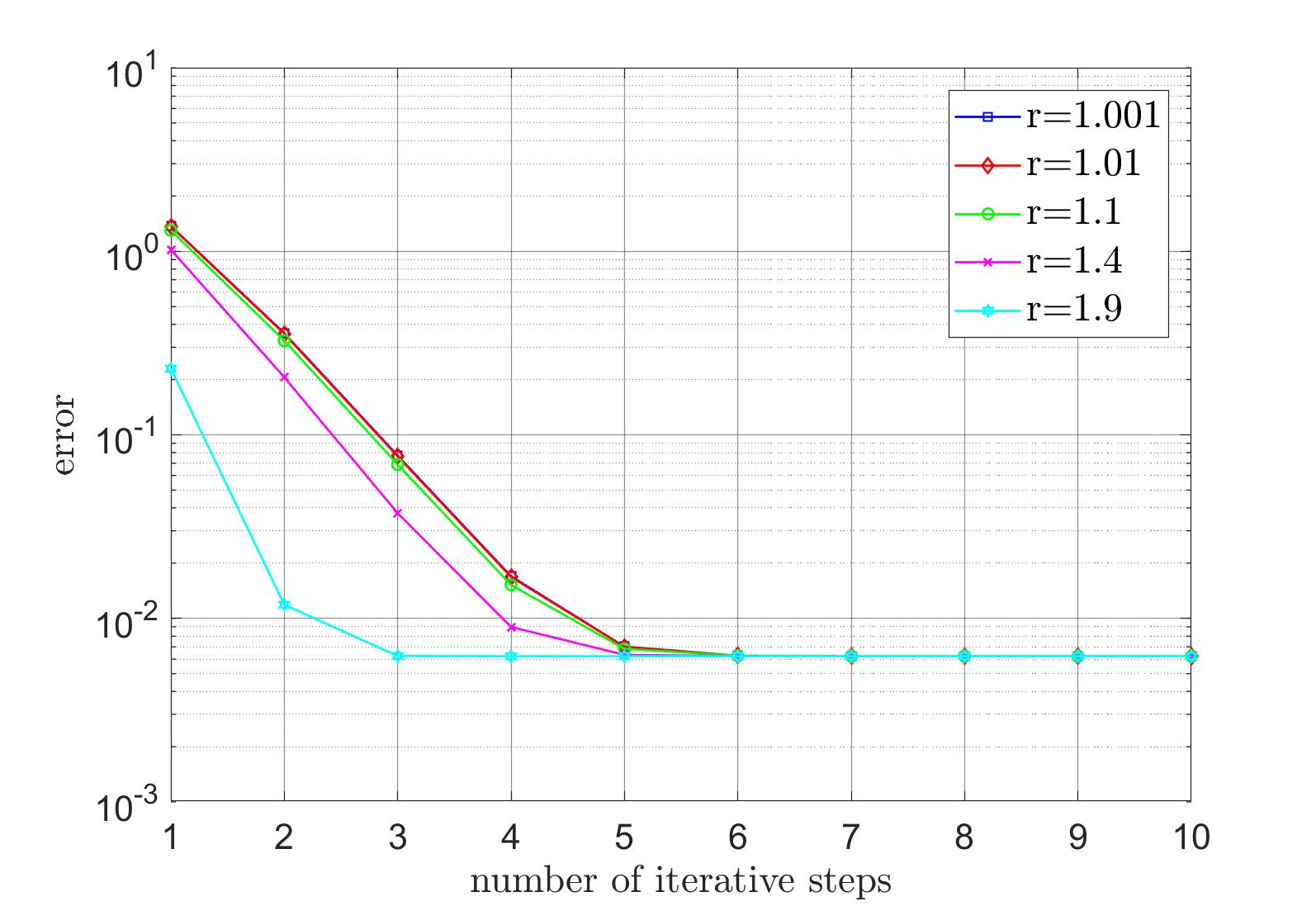}}\hfill
{\includegraphics[width=0.48\textwidth]{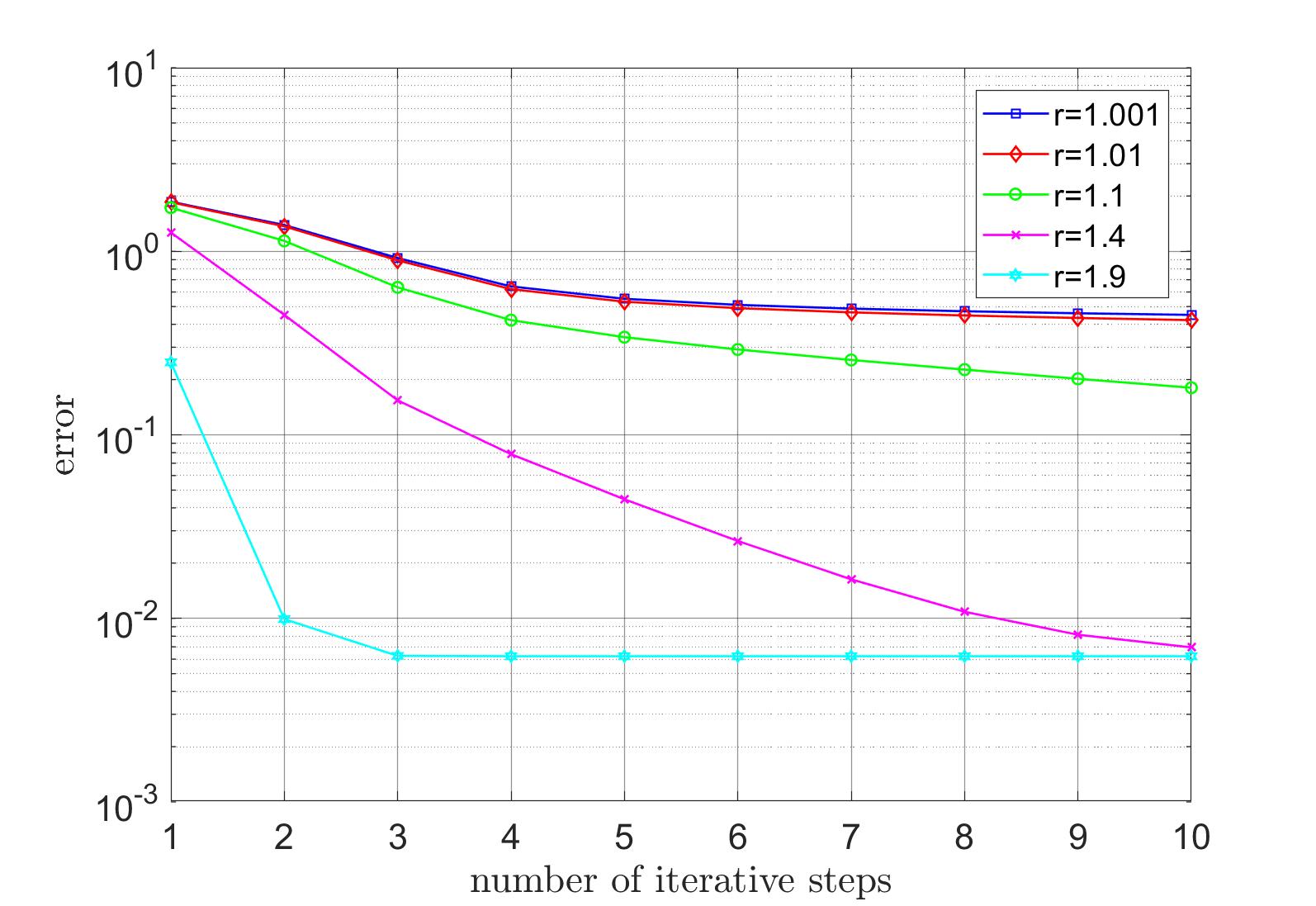}}
 \caption{Influence of the physical parameter $r$ for almost constant viscosity. Left: Carreau model. Right: Relaxed power-law model.}\label{fig:SQ}
\end{figure}


\subsection{Error decay in dependence on the zero and infinite shear rates} Next, we will show that, for fixed $r \in (1,2)$, the convergence rate does not essentially deteriorate when the ratio of the infinite and zero shear rates decreases. As in the experiment before, we choose the source term $f$ so that the unique solution is given by the smooth function $u^\star(x,y)=\sin(\pi x)\sin(\pi y)$. For the Carreau model we set $\lambda=2$, $r=1.5$, $\mu_0=10^a$, and $\mu_\infty=10^{-a}$ for $a \in \{1,2,3,4,5\}$. As we can see from Figure~\ref{fig:CarreauShear} (left), the convergence rate is (almost) independent of $a$, i.e., the convergence does not deteriorate for a decreasing quotient $\nicefrac{\mu_\infty}{\mu_0}$. For the relaxed power-law model we set $r=1.5$, $\varepsilon_{-}=10^{-a}$, and $\varepsilon_{+}=10^a$ for $a \in \{1,2,3,4,5\}$. Even though the plots differ for the various values of $a$, the convergence \emph{rate} is almost the same for all of them; indeed, no significant deterioration of the convergence rate can be observed in Figure~\ref{fig:CarreauShear} (right) for increasing $a$. 

\begin{figure} 
{\includegraphics[width=0.48\textwidth]{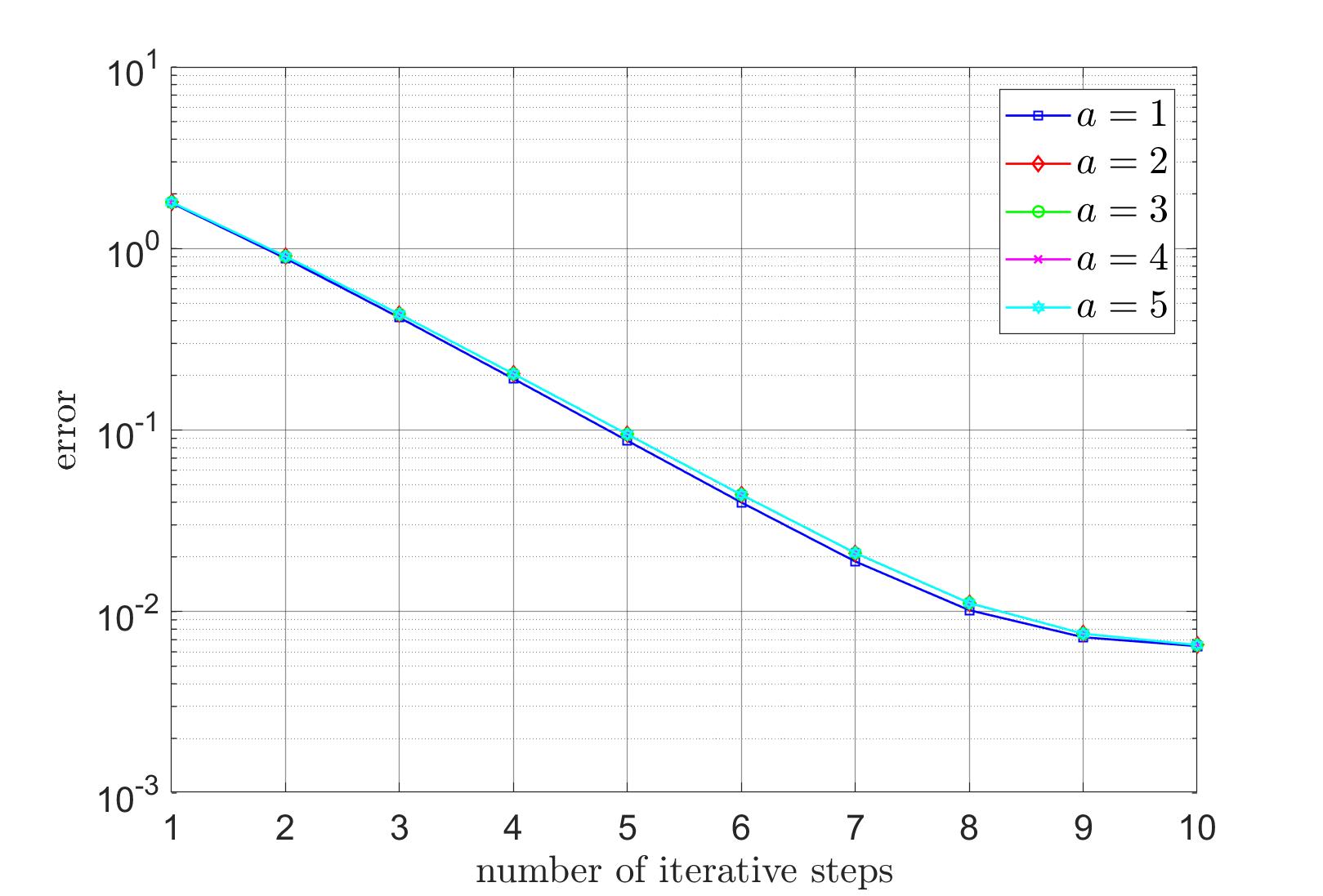}}\hfill
{\includegraphics[width=0.48\textwidth]{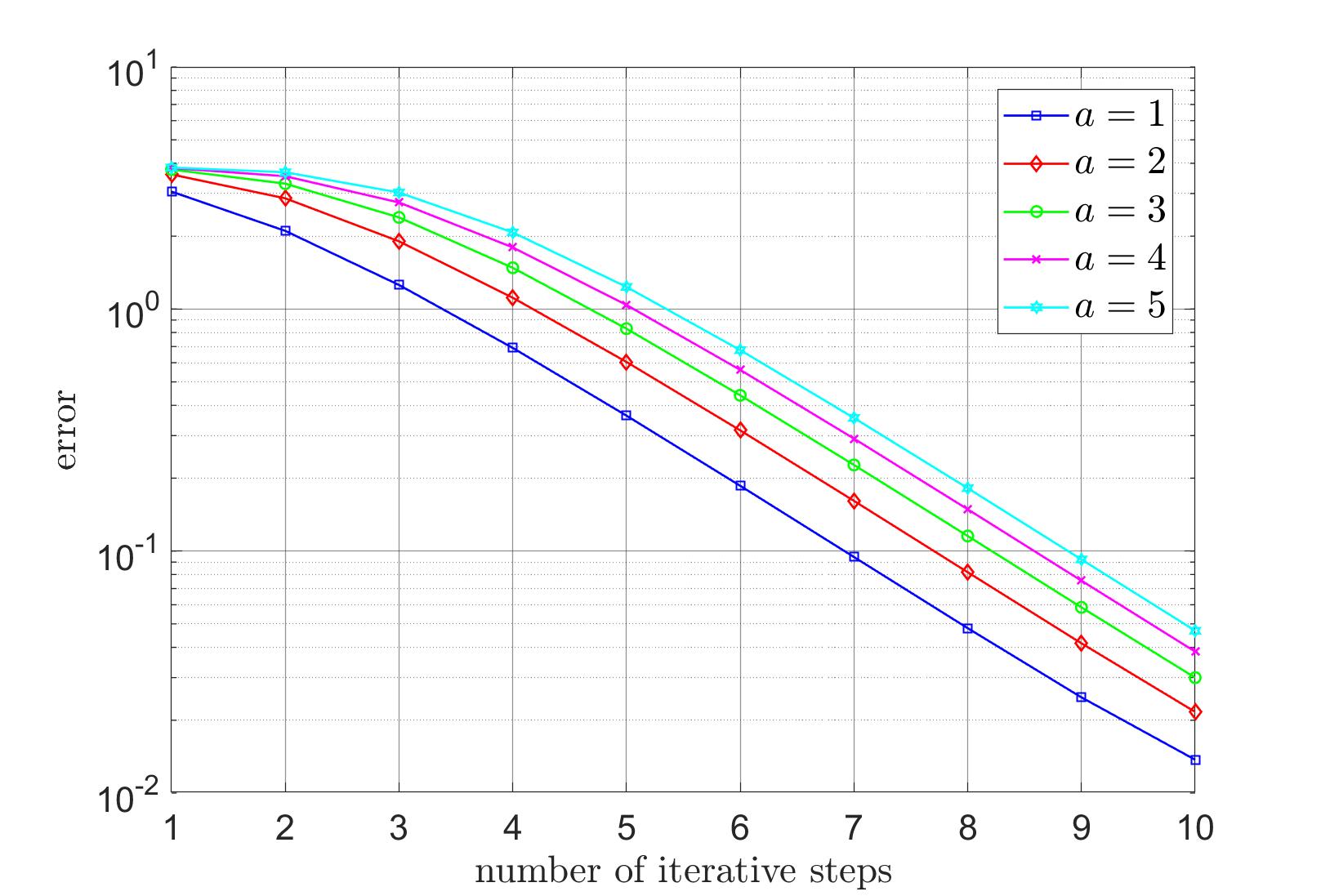}}
 \caption{Influence of the ratio of the infinite and zero shear rates on the convergence. Left: Carreau model with $\lambda=2$, $r=1.5$, $\mu_0=10^a$, and $\mu_\infty=10^{-a}$. Right: Relaxed power-law model with $r=1.5$, $\varepsilon_{-}=10^{-a}$, and $\varepsilon_{+}=10^a$.}\label{fig:CarreauShear}
\end{figure}

\subsection{Energy decay and the contraction factor}

We now focus on the energy decay, and compare the exact contraction factor, cf.~\eqref{eq:qexakt}, the a posteriori computable factor~\eqref{eq:qcomp}, and the worst case factor from Remark~\ref{rem:boundconfac}, cf.~\eqref{eq:qwc}. Again, this will be done for the Carreau and the relaxed power-law models. 
In our figures below, we plot the energy decay $\E(u^n)-\E(u^\star)$, as well as the aforementioned factors 
\begin{align}
q_E(n)&=\frac{\E(u^n)-\E(u^\star)}{\E(u^{n-1})-\E(u^\star)}, \label{eq:qexakt} \\
q_A(n)&=1-\frac{1}{4} \left\{\esssup_{\x \in \Omega} \frac{\mu(|\ef{\uv^n}|^2)}{2 \mu'(|\ef{\uv^n}|^2) |\ef{\uv^n}|^2+\mu(|\ef{\uv^n}|^2)}\right\}^{-1}, \label{eq:qcomp}\\
q_W(n)&=1-\frac{1}{4} \frac{m_\mu}{M_\mu}, \label{eq:qwc}
\end{align}
against the number of iteration steps $n$.

\subsubsection{Energy contraction for the Carreau model}
We consider the Carreau model, cf.~\eqref{eq:carreau}, for $\mu_\infty=1$, $\mu_0=100$, $\lambda=2$, and $r=1.3$, respectively $r=1.1$. In both cases, we approximate the discrete solution for the source term $f$ from case (a) before by performing seventy steps of the Ka\v{c}anov iteration~\eqref{eq:kacanov}, and subsequently use this approximation for the determination of the reference energy $\E(u^\star)$; here, $u^\star$ denotes the unique minimiser in the finite element space. We can clearly observe in Figure~\ref{fig:CarreauContraction} that, on the one hand, the \emph{a posteriori} computable factor $q_A(n)$, cf.~\eqref{eq:qcomp}, is much larger than the actual factor $q_E(n)$, cf.~\eqref{eq:qexakt}. On the other hand, however, the factor $q_A(n)$ clearly still improves the worst case factor $q_W(n)$ from Remark~\ref{rem:boundconfac}, cf.~\eqref{eq:qwc}.

\begin{figure} 
{\includegraphics[width=0.48\textwidth]{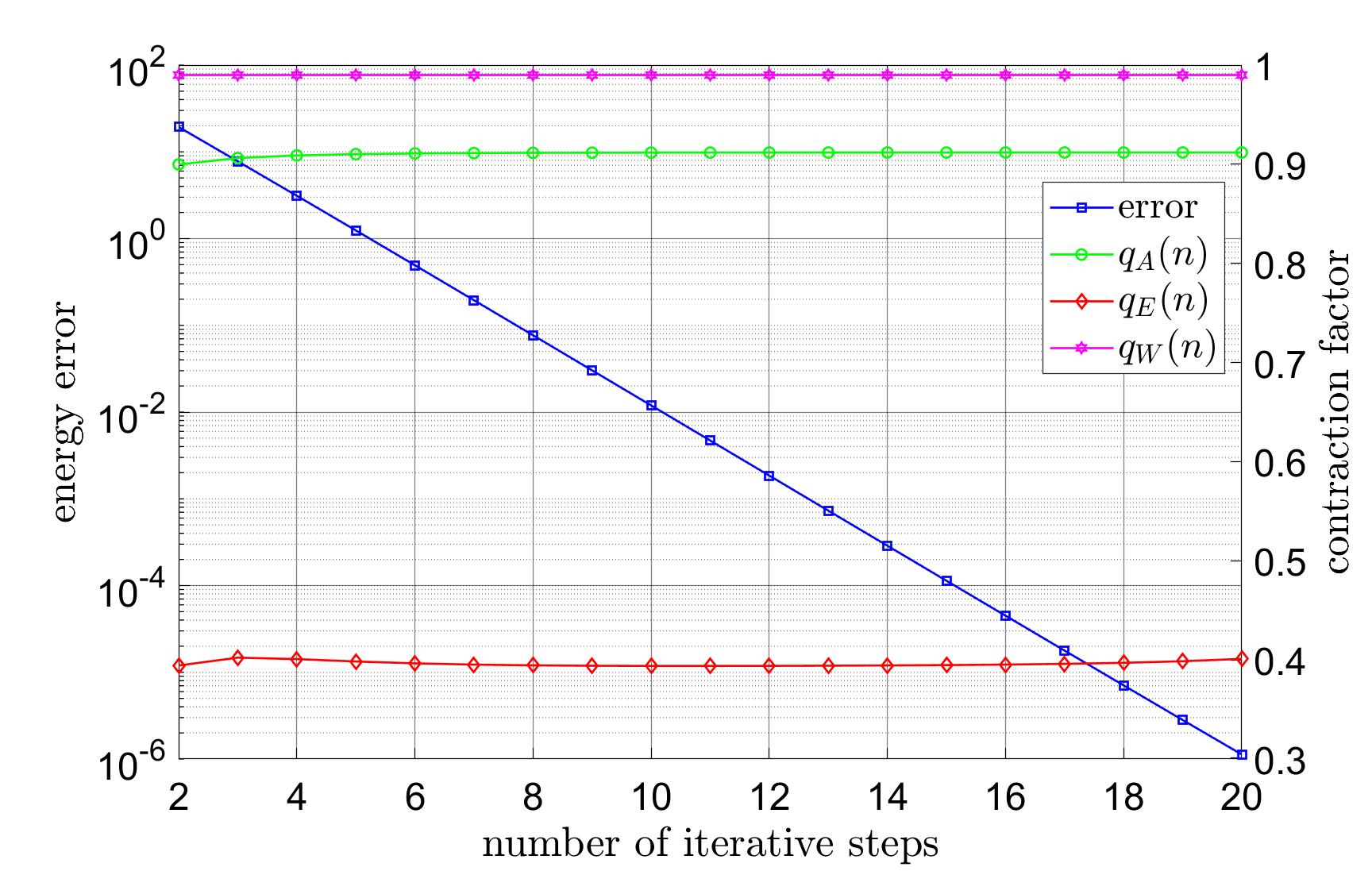}}\hfill
{\includegraphics[width=0.48\textwidth]{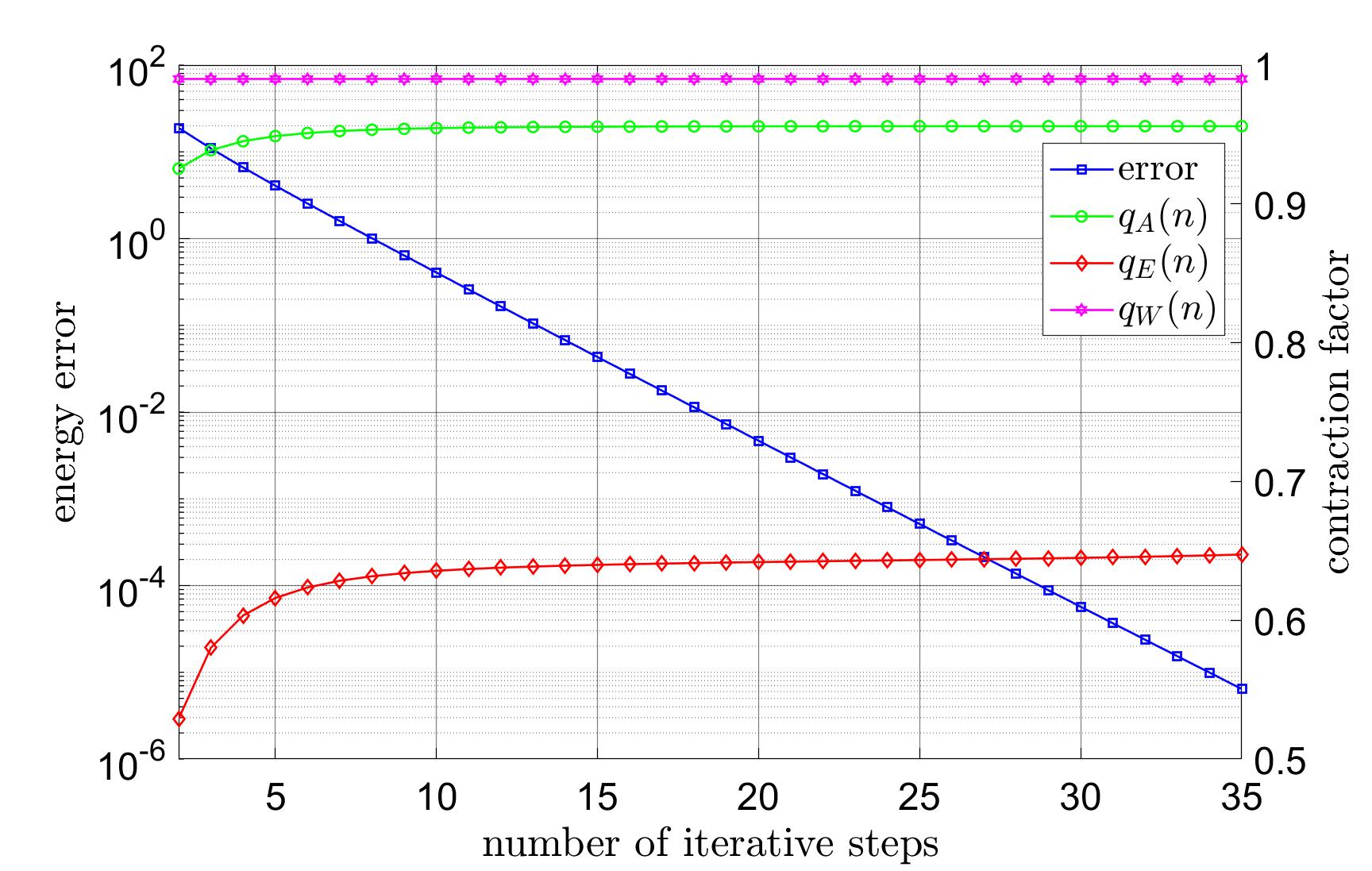}}
 \caption{Energy decay and the contraction factors for the Carreau model with $r=1.3$ (left) and $r=1.1$ (right).}\label{fig:CarreauContraction}
\end{figure}

\subsubsection{Energy contraction for the relaxed power-law model}
Let the coefficient $\mu$ obey the relaxed power-law model with $\varepsilon_{-}=10^{-6}$, $\varepsilon_{+}=10^6$, and $r=1.3$, respectively $r=1.1$. In this experiment, we approximate the discrete solution $u^\star$ by performing fifty and one hundred iteration steps for $r=1.3$ and $r=1.1$, respectively. As before, the a posteriori computable contraction factor $q_A(n)$ is noticeably larger than the exact factor $q_E(n)$, however, this is less marked than before; see Figure~\ref{fig:PowerLawContraction}. Moreover, it considerably improves the worst case contraction factor $q_W(n) \approx 1-10^{-12}$. 

\begin{figure} 
{\includegraphics[width=0.48\textwidth]{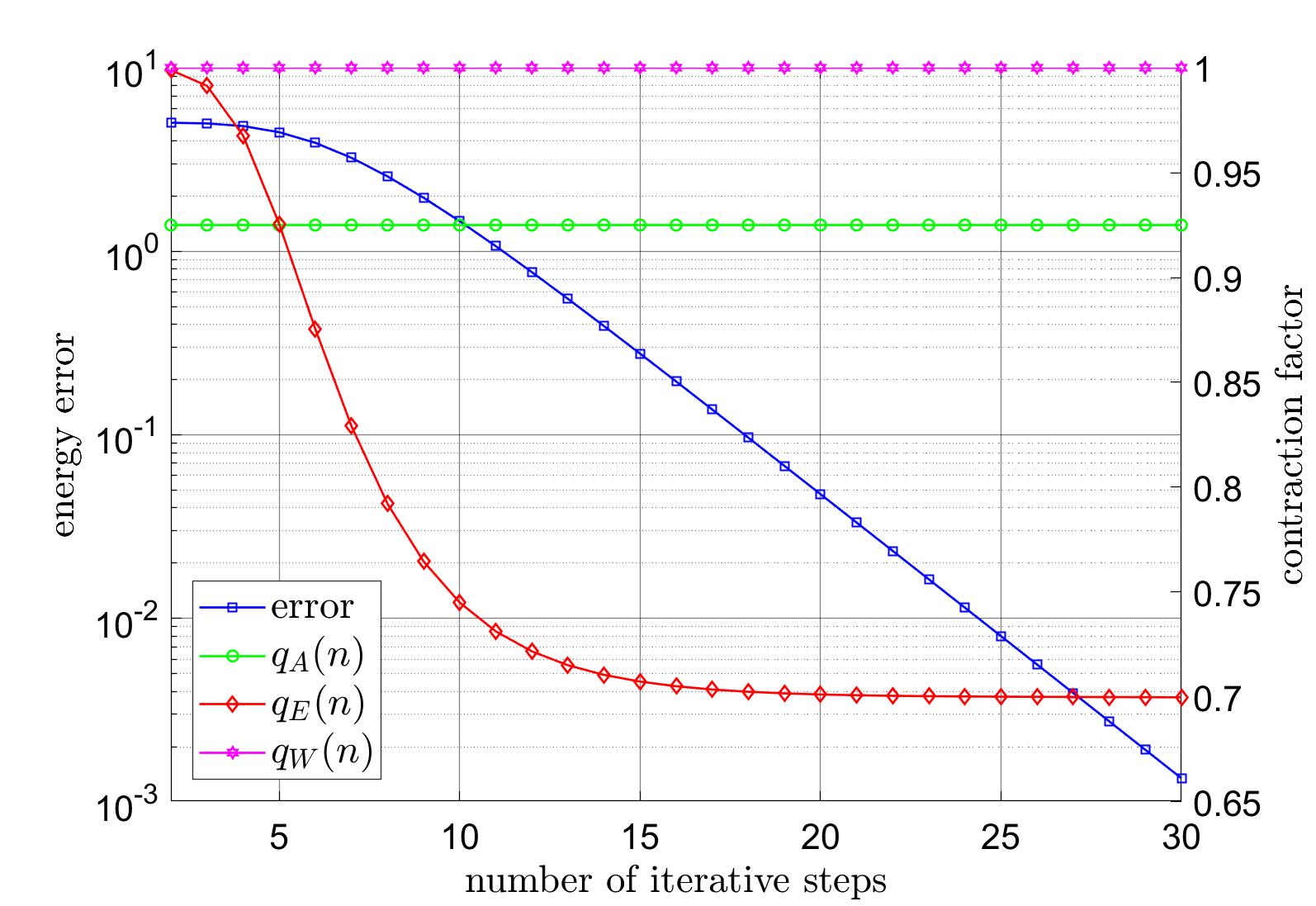}}\hfill
{\includegraphics[width=0.48\textwidth]{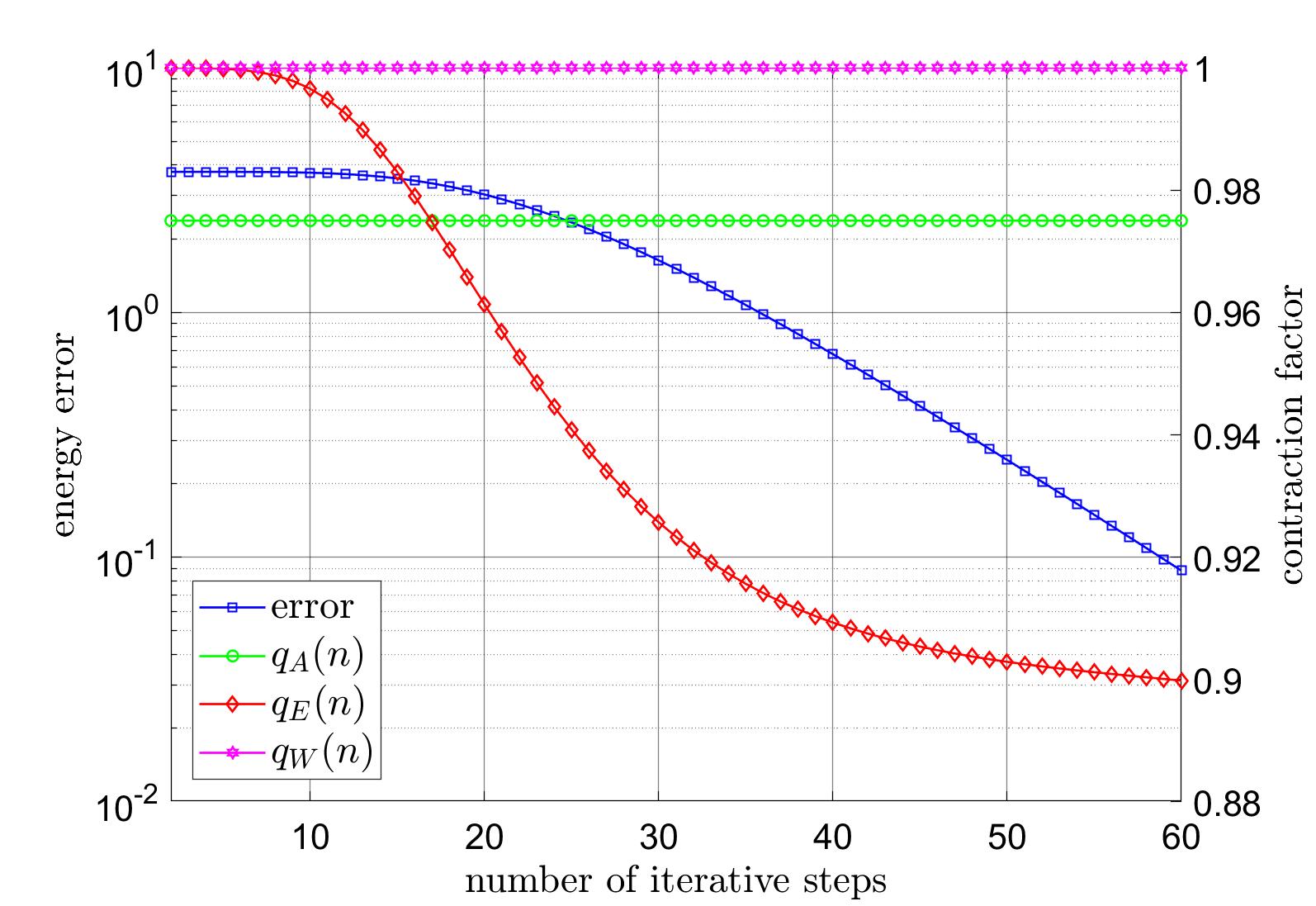}}
 \caption{Energy decay and the contraction factors for the power-law model with $r=1.3$ (left) and $r=1.1$ (right).}\label{fig:PowerLawContraction}
\end{figure}

We now repeat this experiment on a coarser mesh consisting of $\mathcal{O}(10^5)$ uniform triangles. In Figure~\ref{fig:PowerLawContraction2} we plot the factors $q_A(n)$, $q_E(n)$, as well as $q(n)$ from~\eqref{eq:contractionfactor} against the number of iteration steps. We observe that the (non-computable) factor $q(n)$ from~\eqref{eq:contractionfactor} has a similar trend as the exact factor $q_E(n)$, cf.~\eqref{eq:qexakt}, and approximates the computable factor~$q_A(n)$, cf.~\eqref{eq:qcomp}, as the number of iteration steps increases.

\begin{figure} 
{\includegraphics[width=0.48\textwidth]{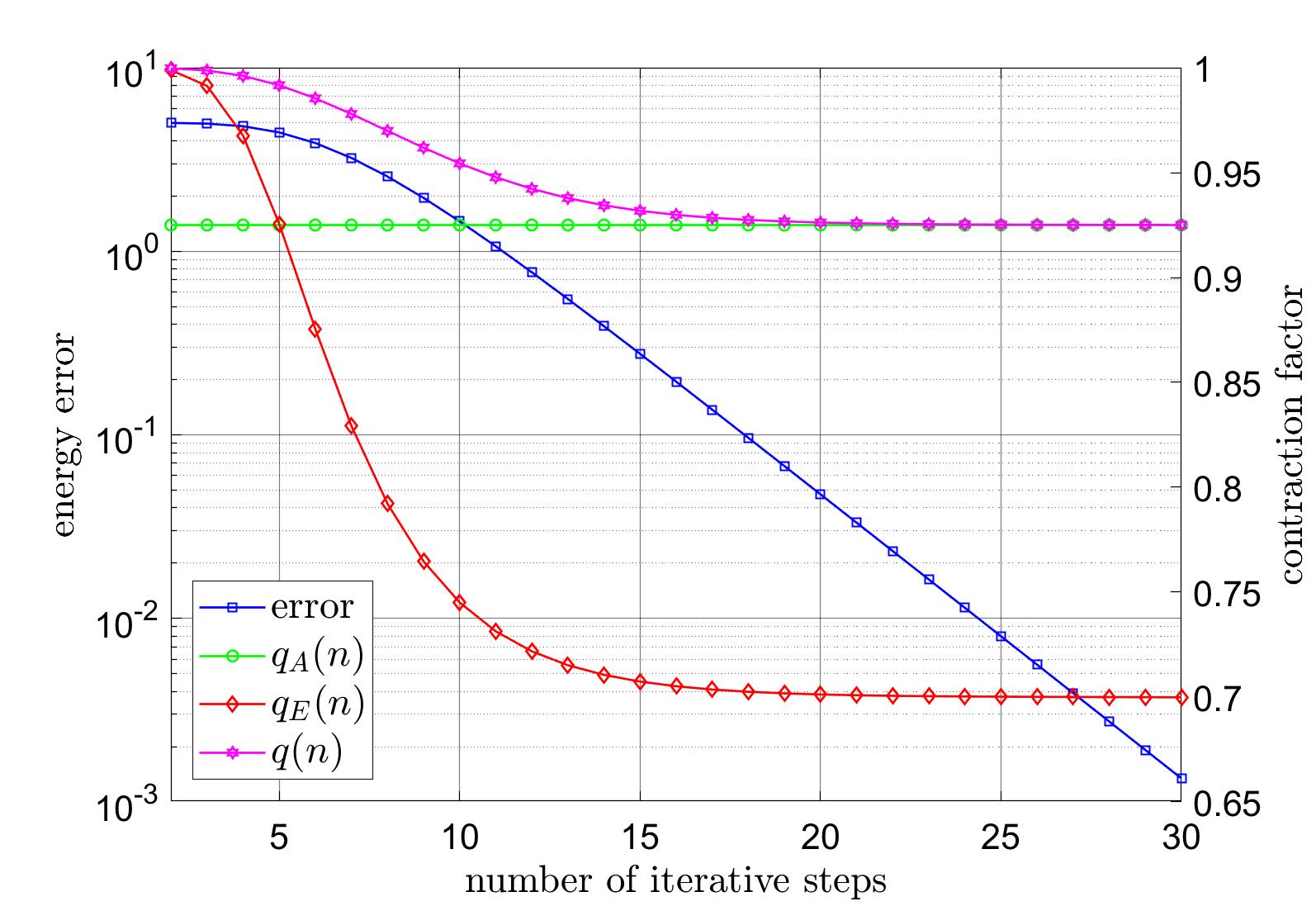}}\hfill
{\includegraphics[width=0.48\textwidth]{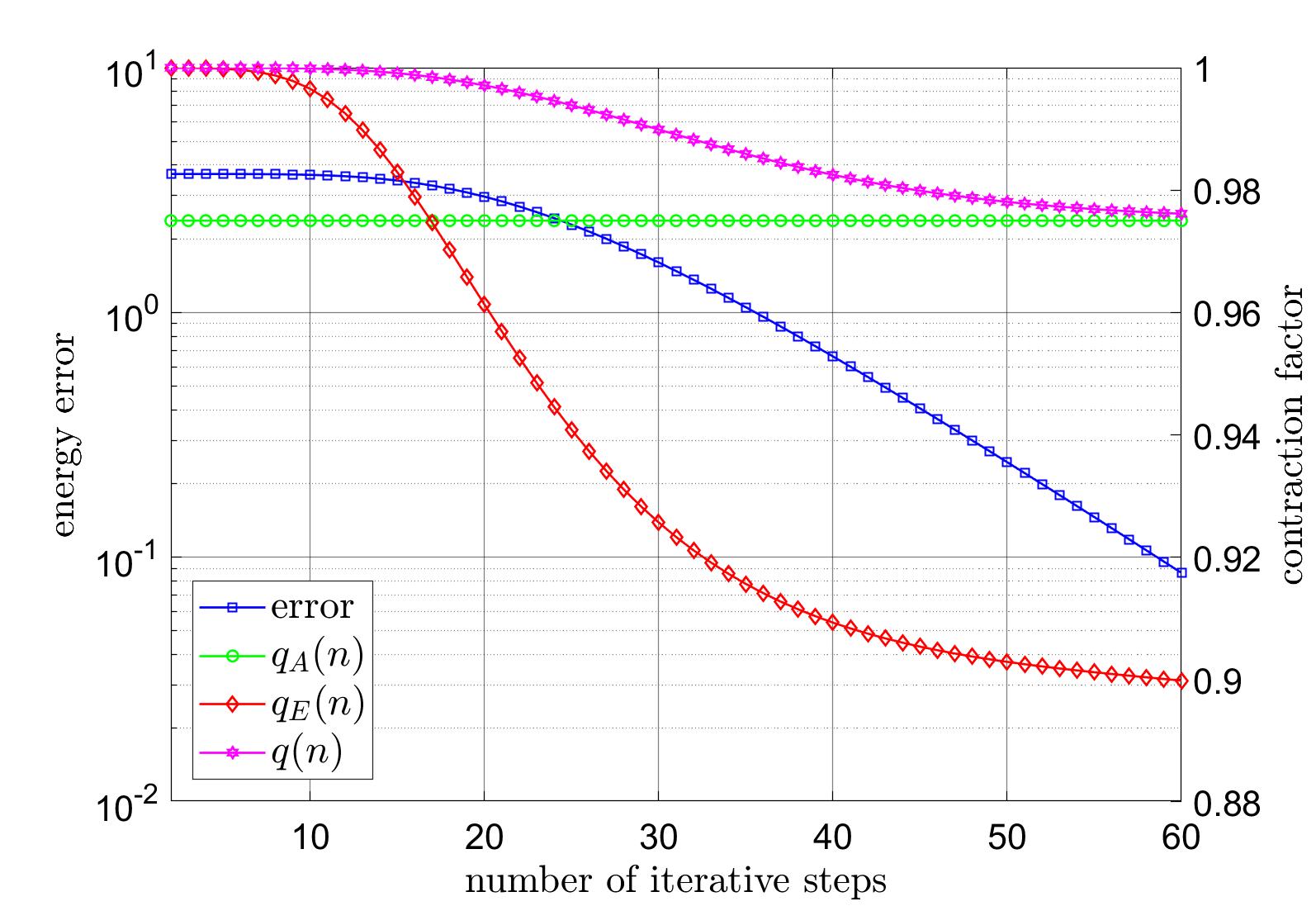}}
 \caption{Energy decay and the contraction factors for the power-law model with $r=1.3$ (left) and $r=1.1$ (right) in the coarser mesh.}\label{fig:PowerLawContraction2}
\end{figure}

Finally, we remark that, in the context of fixed point iterations, the contraction factor can be (heuristically) approximated by 
\begin{align} \label{eq:qh}
q_H(n):=\min\left\{1,\frac{\E(u^n)-\E(u^{n-1})}{\E(u^{n-1})-\E(u^{n-2})}\right\}
\end{align}
as $n \to \infty$, see, e.g.,~\cite{senning:2007}; we emphasize that $q_H(n) \geq 0$, for $n \geq 2$, thanks  to~\eqref{eq:thmfirstsummand}. As can be observed in Figure~\ref{fig:PowerLawContraction3}, the factor $q_H(n)$, cf.~\eqref{eq:qh}, does indeed approximate the exact factor $q_E(n)$ from~\eqref{eq:qexakt} well for sufficiently large $n$. However, in contrast with the bound $q_A(n)$ from Theorem~\ref{thm:globalcontractioncomp}, the computable factor $q_H(n)$ does not provide any guaranteed \textit{a priori} information.

\begin{figure} 
{\includegraphics[width=0.48\textwidth]{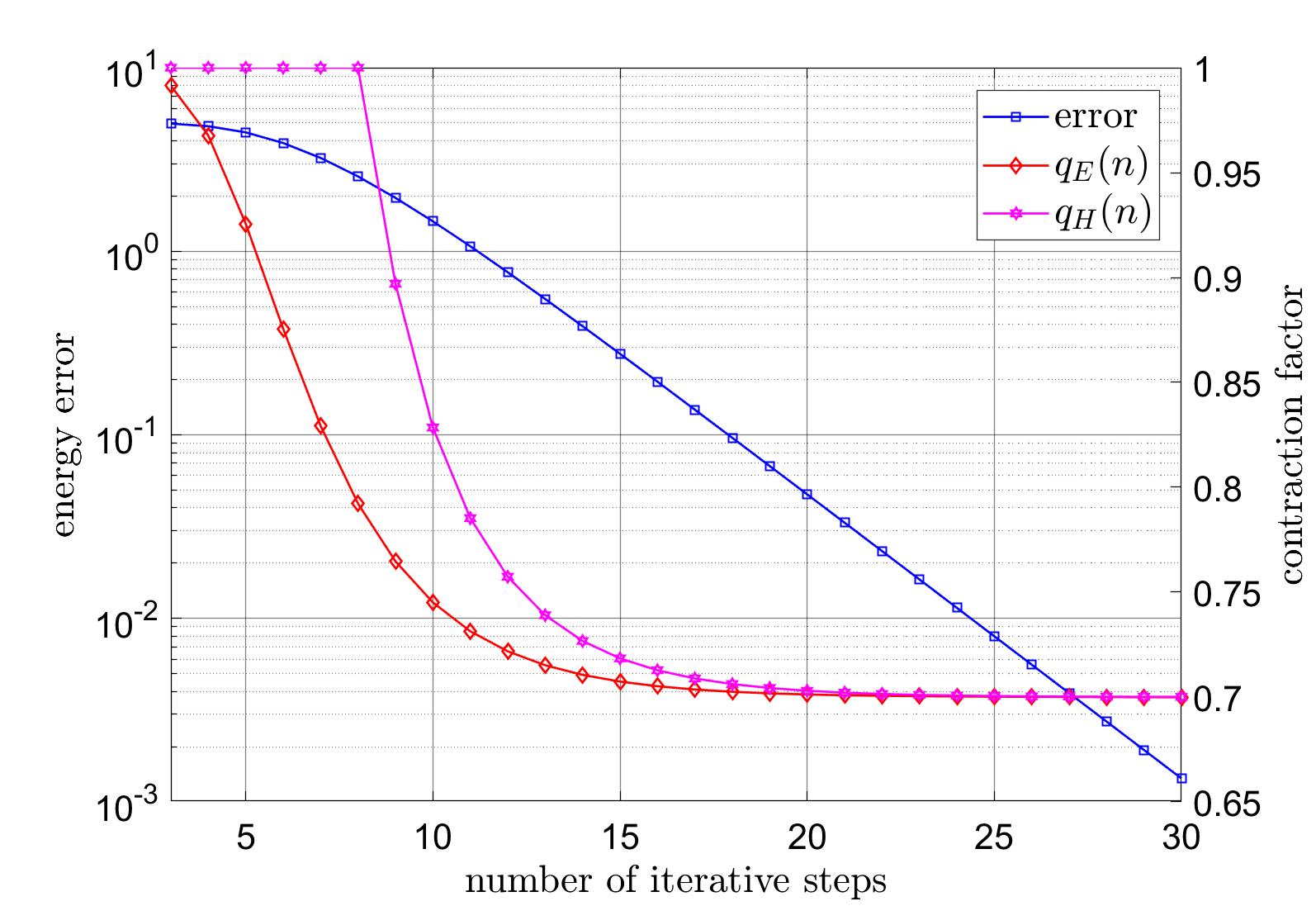}}\hfill
{\includegraphics[width=0.48\textwidth]{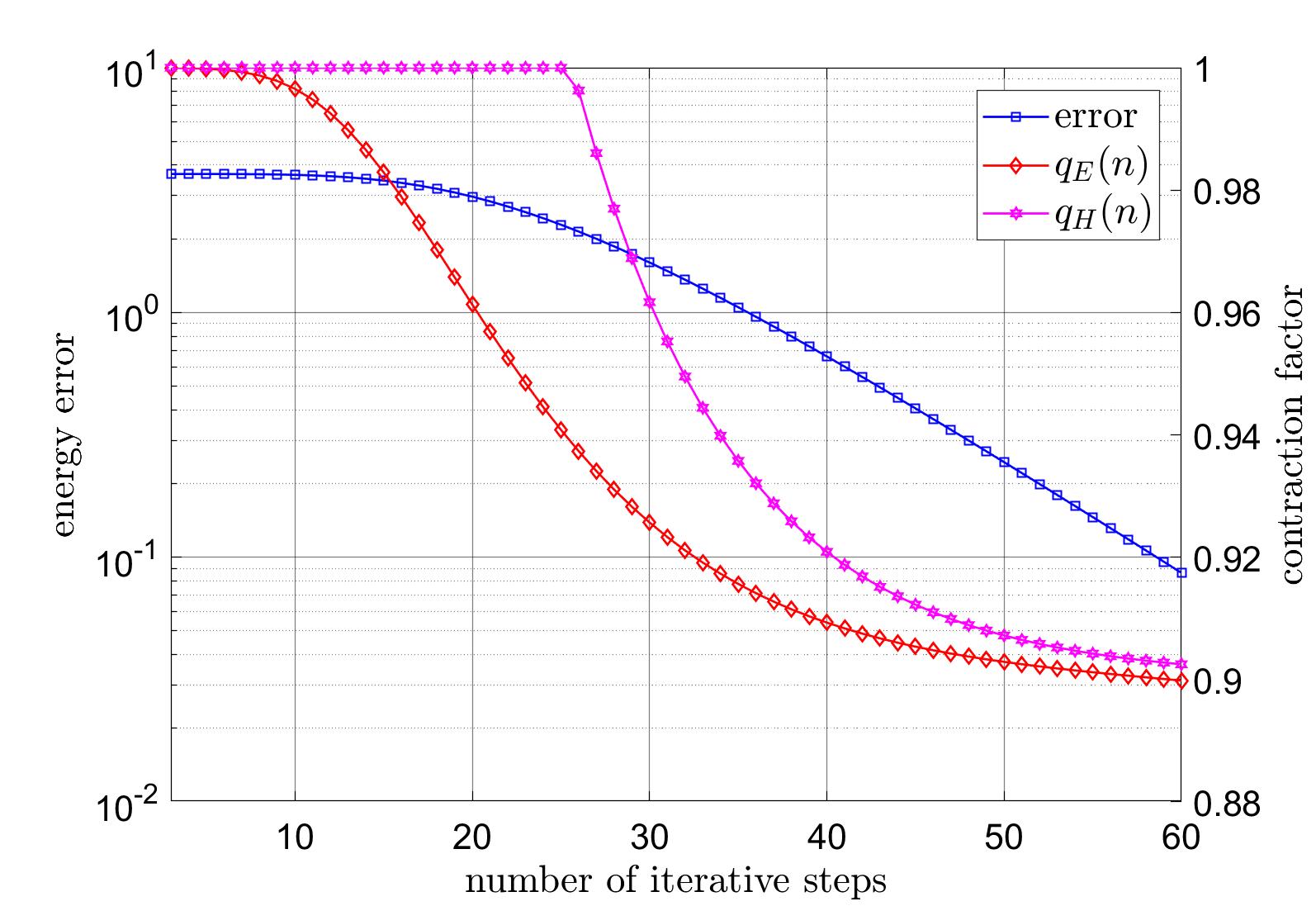}}
 \caption{Energy decay and the contraction factors for the power-law model with $r=1.3$ (left) and $r=1.1$ (right).}\label{fig:PowerLawContraction3}
\end{figure}


\subsection{Energy decay for different mesh sizes}

We conclude this section with a comparison of the energy decay for different mesh sizes. For the Carreau model, cf.~\eqref{eq:carreau}, we set $\mu_\infty=1$, $\mu_0=100$, $\lambda=2$, and $r=1.3$. In the case of the relaxed power-law model, let $\varepsilon_{-}=10^{-6}$, $\varepsilon_{+}=10^6$, and $r=1.3$. In each case we approximated the discrete solution, and, in turn, the corresponding energy by performing one hundred iteration steps. As we can see from Figure~\ref{fig:MeshComp}, the asymptotic convergence rates (almost) coincide for the different mesh sizes.

\begin{figure} 
{\includegraphics[width=0.48\textwidth]{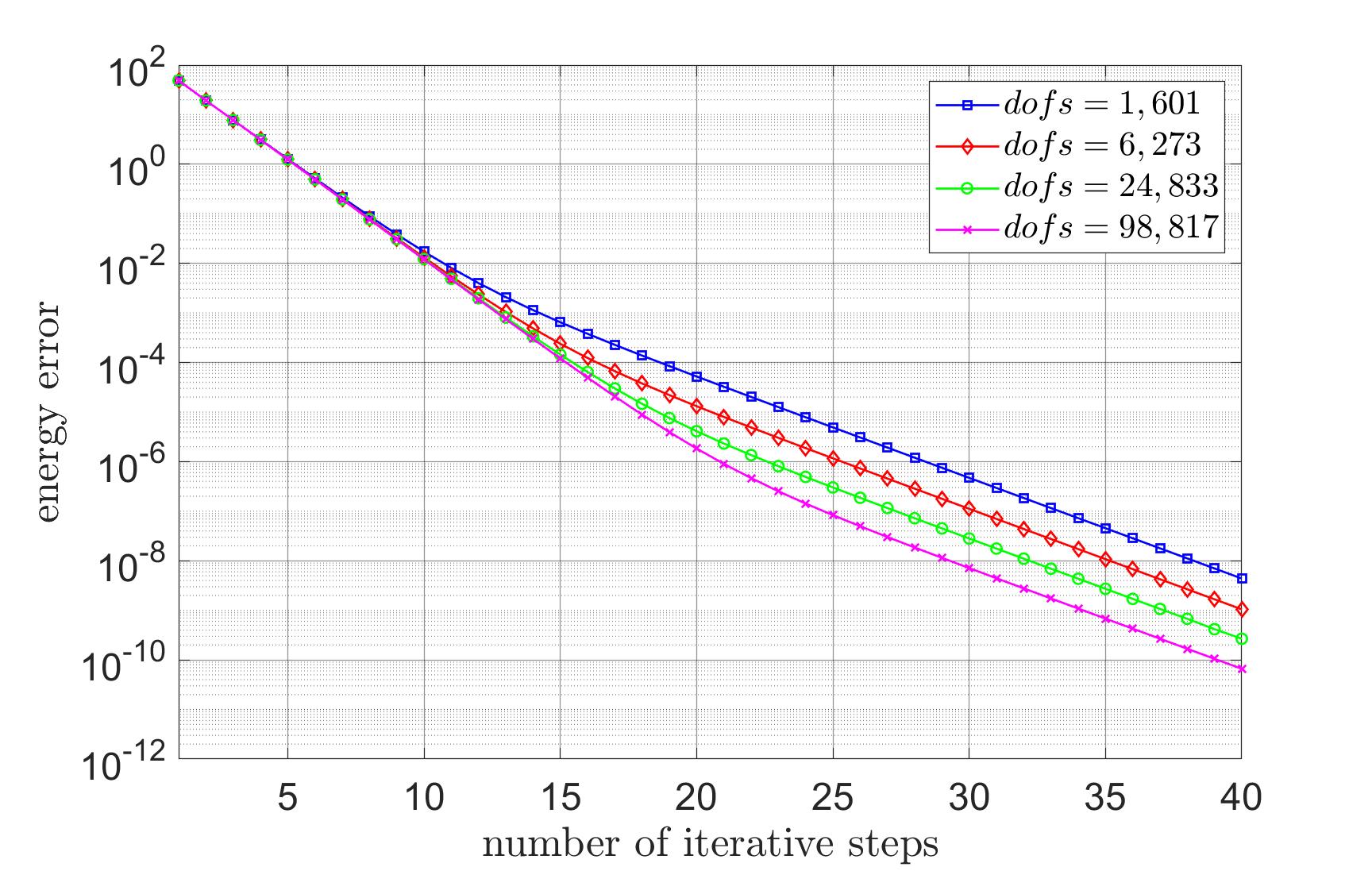}}\hfill
{\includegraphics[width=0.48\textwidth]{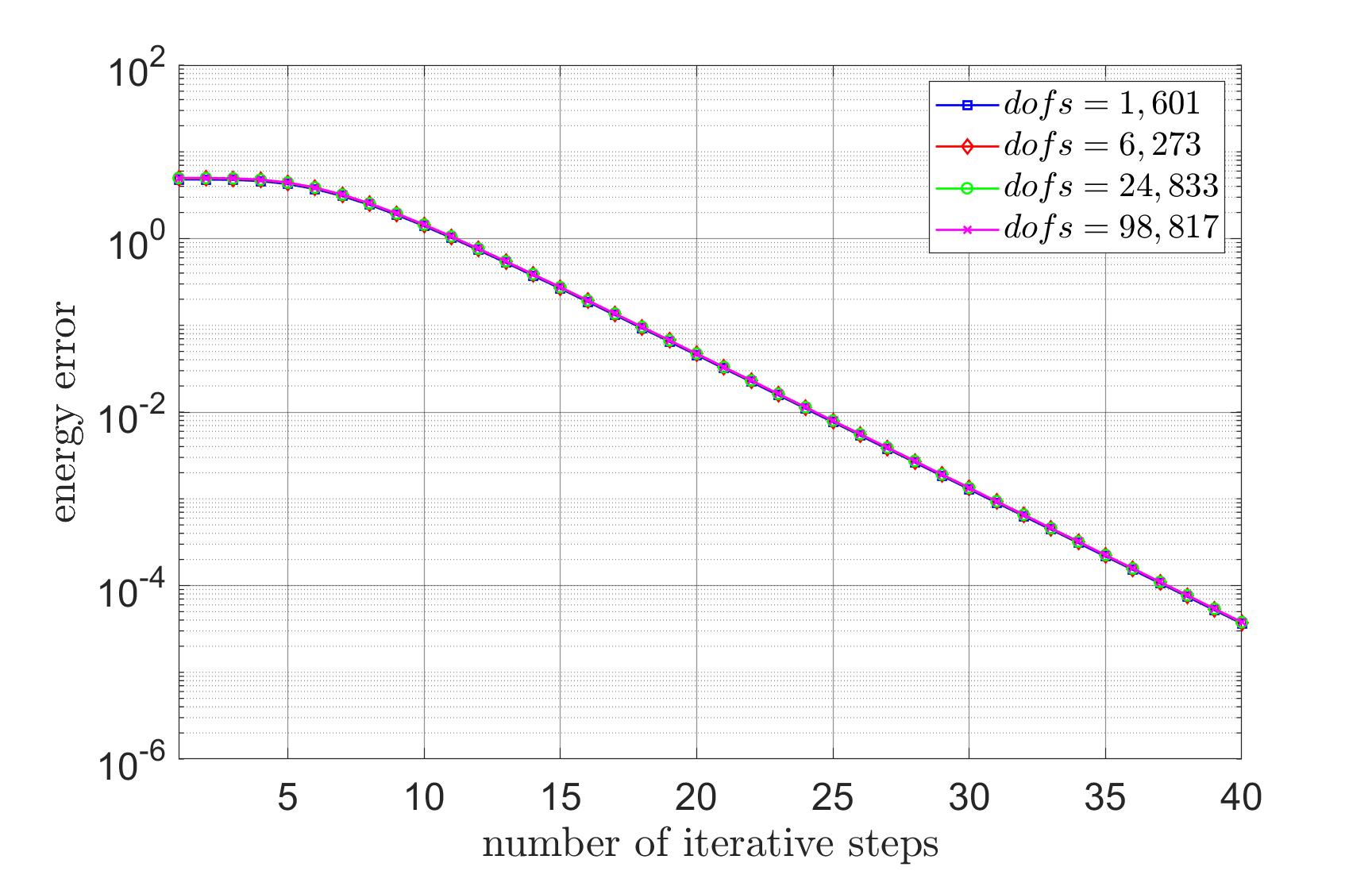}}
 \caption{Energy decay for the Carreau model (left) and relaxed power-law model (right) for different mesh sizes.}\label{fig:MeshComp}
\end{figure}

\section{Conclusion} \label{sec:conclusions}
In this article, we established an \emph{a posteriori} computable (energy) contraction factor for the Ka\v{c}anov scheme (on finite-dimensional Galerkin spaces), motivated by applications to quasi-Newtonian fluid flow problems. For the relaxed power-law model, this factor is independent of the relaxation parameters $\varepsilon_{\pm}$; we also demonstrated that it is, instead, the power-law exponent that affects the convergence rate of the iteration. In contrast, existing bounds on the contraction factor of the relaxed Ka\v{c}anov iteration depend on the relaxation parameters $\varepsilon_{\pm}$ in an unfavourable manner, in the sense that they tend to 1 as $\varepsilon_{-} \to 0$ and/or $\varepsilon_{+} \to \infty$. A series of numerical tests have confirmed that our \emph{a posteriori} computable contraction factor improves, on finite-dimensional Galerkin spaces, existing bounds, and that, as predicted by our analysis, for the power-law model it is in fact the closeness of the power-law exponent $r \in (1,2)$ to 1 that influences the convergence rate of the iteration. However, our experiments revealed that the theoretically derived bound on the contraction factor of the Ka\v{c}anov scheme is still too pessimistic.   

\begin{appendix}
\section{Smoothly relaxed power-law model} \label{app:differentiablerelaxatio}
In this appendix, we will introduce a continuously differentiable approximation of the relaxed power-law viscosity~\eqref{eq:relaxedpowerlaw}. In particular, for $0<\delta <\varepsilon_{-}^2 < \varepsilon_{+}^2$, we want to define a function $\mu_{\delta,\varepsilon}:\mathbb{R}_{\geq 0} \to \mathbb{R}_{\geq 0}$ which
\begin{enumerate}[(a)]
\item is continuously differentiable;
\item coincides with $\mu_{\varepsilon}$, cf.~\eqref{eq:relaxedpowerlaw}, in the domain $[\varepsilon_{-}^2+\delta,\varepsilon_{+}^2-\delta]$; 
\item is constant on $[0,\varepsilon_{-}^2-\delta] \cup [\varepsilon_{+}^2+\delta,\infty)$;
\item converges pointwise to $\mu_\varepsilon$ for $\delta \to 0$. 
\end{enumerate}
The idea is to identify quadratic functions $g_{\delta,\varepsilon}^{\pm}$ on $[\varepsilon_{\pm}^2-\delta,\varepsilon_{\pm}^2+\delta]$, respectively, which smoothly connect the constant parts of $\mu_{\varepsilon}$ with the map $t \mapsto t^{\frac{r-2}{2}} = \mu(t)$ on $[\varepsilon_{-}^2+\delta,\varepsilon_{+}^2-\delta]$, i.e.,
\begin{align*}
\mu_{\delta,\varepsilon}(t)=\begin{cases}
g_{\delta,\varepsilon}^{-}(\varepsilon_{-}^2-\delta) & 0 \leq t \leq \varepsilon_{-}^2-\delta \\
g_{\delta,\varepsilon}^{-}(t) & \varepsilon_{-}^2-\delta<t\leq \varepsilon_{-}^2+\delta \\
t^{\frac{r-2}{2}} & \varepsilon_{-}^2+\delta < t \leq \varepsilon_{+}^2-\delta \\
g_{\delta,\varepsilon}^{+}(t) & \varepsilon_{+}^2-\delta <t \leq \varepsilon_{+}^2+\delta \\
g_{\delta,\varepsilon}^{+}(\varepsilon_{+}^2+\delta) & t > \varepsilon_{+}^2+\delta.
\end{cases}
\end{align*}
A straightforward calculation reveals that the properties (a)--(d) are satisfied for
\begin{align*}
g_{\delta,\varepsilon}^{-}(t)=&\frac{(\varepsilon_{-}^2+\delta)^{\frac{r-4}{2}}(r-2)}{8 \delta} t^2-\frac{(\varepsilon_{-}^2+\delta)^{\frac{r-4}{2}}(\varepsilon_{-}^2- \delta)(r-2)}{4 \delta}t \\
& \quad -\frac{(\varepsilon_{-}^2+ \delta)^{\frac{r-2}{2}}(-14 \delta + 2 \varepsilon_{-}^2+3 \delta r-\varepsilon_{-}^2r)}{8 \delta}
\end{align*}
and 
\begin{align*}
g_{\delta,\varepsilon}^{+}(t)=&-\frac{(\varepsilon_{+}^2-\delta)^{\frac{r-4}{2}}(r-2)}{8 \delta} t^2+\frac{(\varepsilon_{+}^2-\delta)^{\frac{r-4}{2}}(\varepsilon_{+}^2+\delta)(r-2)}{4 \delta} t \\
& \quad -\frac{(\varepsilon_{+}^2-\delta)^{\frac{r-2}{2}}(-14 \delta - 2 \varepsilon_{+}^2+3 \delta r+\varepsilon_{+}^2r)}{8 \delta}.
\end{align*}

\begin{figure} 
{\includegraphics[width=0.48\textwidth]{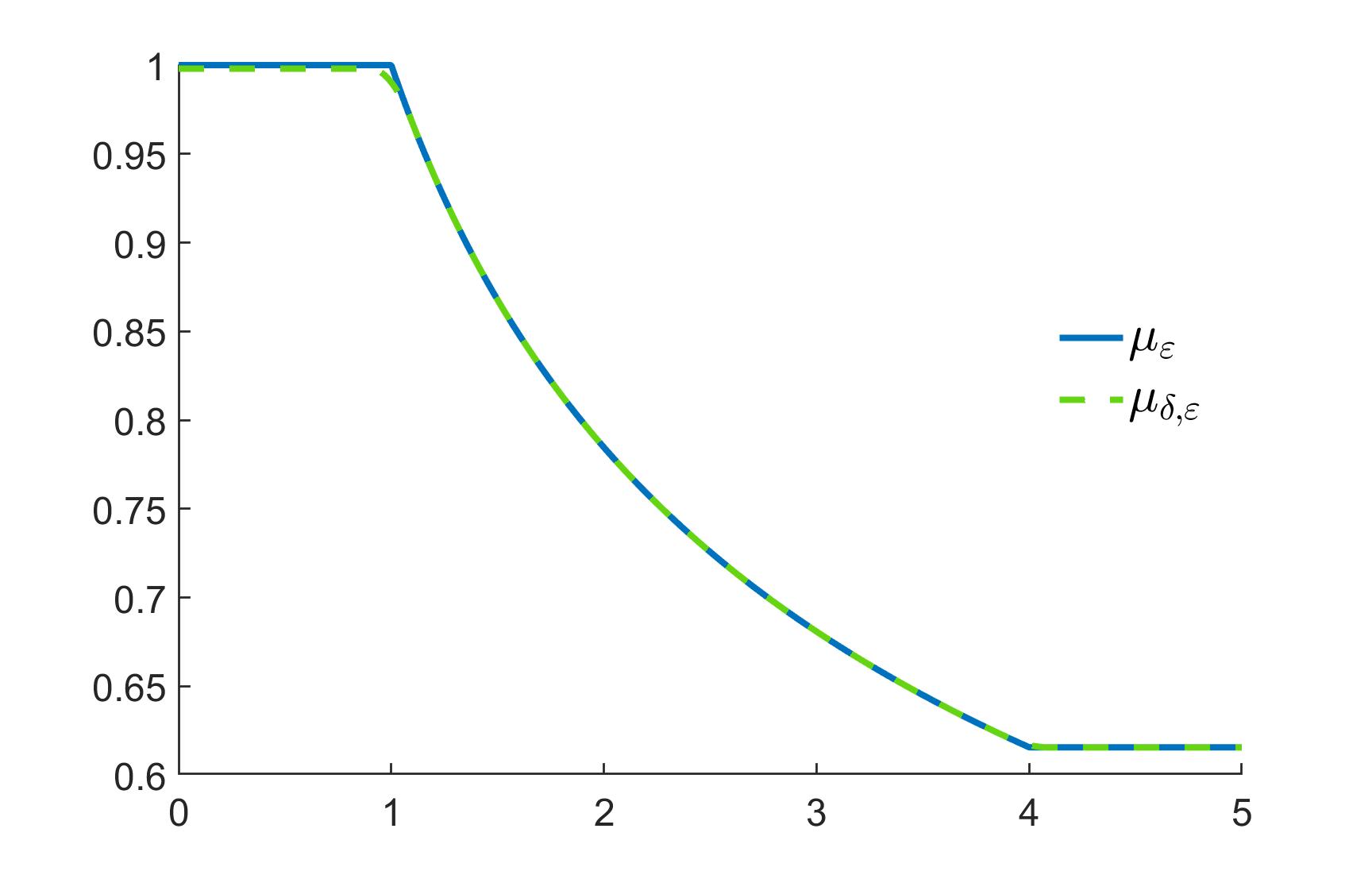}}
 \caption{Comparison of $\mu_{\varepsilon}$ and $\mu_{\delta,\varepsilon}$ for $r=1.3, \, \varepsilon_{-}=1, \, \varepsilon_{+}=2$, and $\delta=0.1$.}
\end{figure}
\end{appendix}

\bibliographystyle{amsplain}
\bibliography{references} 
\end{document}